\DeclareMathAlphabet{\mathbb}{U}{msb}{m}{n}
\definecolor{red}{rgb}{1,0,0}
\definecolor{darkred}{RGB}{192,0,0}
\newcommand{\transpose}[1]{\tensor[^{\mathrm{t}}]{#1}{}}
\newcommand{\bfx}{\mathbf{x}}
\newcommand{\bfy}{\mathbf{y}}
\newcommand{\calA}{\mathcal{A}}
\newcommand{\calB}{\mathcal{B}}
\newcommand{\calD}{\mathcal{D}}
\newcommand{\calH}{\mathcal{H}}
\newcommand{\calR}{\mathcal{R}}
\newcommand{\bbC}{\mathbb{C}}
\newcommand{\bbK}{\mathbb{K}}
\newcommand{\bbN}{\mathbb{N}}
\newcommand{\bbP}{\mathbb{P}}
\newcommand{\bbR}{\mathbb{R}}
\newcommand{\bbZ}{\mathbb{Z}}
\newcommand{\mfS}{\mathfrak{S}}
\newcommand{\rme}{\mathrm{e}}
\newcommand{\rmi}{\mathrm{i}}
\renewcommand{\bar}[1]{\overline{#1}}
\newcommand{\Lap}{\mathop{}\!\Delta}
\DeclareMathOperator{\brk}{brk}
\DeclareMathOperator{\Cat}{Cat}
\DeclareMathOperator{\Der}{D}
\DeclareMathOperator{\HF}{HF}
\DeclareMathOperator{\Ker}{Ker}
\DeclareMathOperator{\rk}{rk}
\DeclareMathOperator{\tr}{tr}
\DeclarePairedDelimiter{\pint}{\lfloor}{\rfloor}
\DeclarePairedDelimiter{\pa}{\langle}{\rangle}
\DeclarePairedDelimiter{\pt}{(}{)}
\DeclarePairedDelimiter{\pg}{\{ }{ \} }
\newcommand{\Mat}{\mathrm{Mat}}
\newcommand{\GL}{\mathrm{GL}}
\newcommand{\Oa}{\mathrm{O}}
\newcommand{\SL}{\mathrm{SL}}
\newcommand{\SO}{\mathrm{SO}}
\newcommand{\LT}{\mathrm{LT}}
\newcommand{\mfsl}{\mathfrak{sl}}
\newcommand{\mfso}{\mathfrak{so}}
\titleformat{\section}{
 \vspace{2pt}\scshape\fontfamily{ptm}\raggedright\large}{}{0em}{\hspace{-0.4pt}\large \thesection.\hspace{0.5em}}[\color{black}\titlerule \vspace{-1pt}]
\scshape\fontfamily{ptm}\raggedright\large}{}{0em}
\titlerule \vspace{-1pt}]
\numberwithin{equation}{section}
\theoremstyle{definition}
\newtheorem{defn}[equation]{Definition}
\theoremstyle{plain}
\newtheorem{teo}[defn]{Theorem}
\newtheorem{prop}[defn]{Proposition}
\newtheorem{lem}[defn]{Lemma}
\newtheorem*{teor}{Theorem}
\newtheorem{cor}[defn]{Corollary}
\theoremstyle{remark}
\newtheorem{rem}[defn]{Remark}
\renewcommand{\sectionmark}[1]{\markboth{\normalfont \scshape\fontfamily{ptm}\selectfont Cosimo Flavi}{\normalfont \scshape\fontfamily{ptm}\selectfont Border rank of powers of ternary quadratic forms}}
\title[\scriptsize Border rank of powers of ternary quadratic forms]{Border rank of powers of ternary quadratic forms}
\author[\scshape Cosimo Flavi]{Cosimo Flavi}
\address{{\normalfont (Cosimo Flavi)},
\normalfont \scshape\fontfamily{ptm}\selectfont
	Università degli Studi di Firenze, 
	Dipartimento di Matematica e Informatica "Ulisse Dini", \normalfont{Viale Giovanni Battista Morgagni 67/a, 50134 Florence, Italy.}}
\email{cosimo.flavi@unifi.it}
\keywords{Additive decompositions, tensor rank}
\subjclass{14N07}
\begin{document}
\renewcommand{\abstractname}{\normalfont \scshape\fontfamily{ptm}\selectfont{Abstract}}
\begin{abstract}
We determine the border rank of each power of any quadratic form in three variables. Since the problem for rank $1$ and rank $2$ quadratic forms can be reduced to determining the rank of powers of binary forms, we primarily focus on non-degenerate quadratic forms.  We begin by considering the quadratic form $q_{n}=x_1^{2}+\dots+x_n^{2}$ in an arbitrary number $n$ of variables. We determine the apolar ideal of any power $q_n^s$, proving that it corresponds to the homogeneous ideal generated by the harmonic polynomials of degree $s+1$. Using this result, we select a specific ideal contained in the apolar ideal for each power of a quadratic form in three variables, which, without loss of generality, we assume to be the form $q_3$. After verifying certain properties, we utilize the recent technique of border apolarity to establish that the border rank of any power $q_3^s$ is equal to the rank of its middle catalecticant matrix, namely $(s+1)(s+2)/2$.
\end{abstract}
\maketitle
\thispagestyle{empty}
\section*{Introduction}
\markboth{\normalfont \scshape\fontfamily{ptm}\selectfont Cosimo Flavi}{\normalfont \scshape\fontfamily{ptm}\selectfont Border rank of powers of ternary quadratic forms}
\noindent The problem of determining the minimum integer $r$ such that a homogeneous polynomial of degree $d\in\bbN$ can be expressed as a sum of $r$ different $d$-th powers of linear forms, also known as \textit{Waring rank}, is a classical problem that remains relevant in many applications today. Formally, a homogeneous polynomial can be seen as a symmetric tensor, and the Waring rank corresponds to its symmetric rank. This fact clearly represents a way to approach the classical problem of determining Waring rank of polynomials, referred to as the \textit{Big Waring Problem}. For an overview of tensor rank, border rank, tensor decomposition, and its applications, we refer to \cite{BCC+18} and \cite{Lan12}. Instead, to have a survey on some algorithms or methods that can be used to determine the symmetric rank of a symmetric tensor, one can consult \cite{BGI11} or \cite{LO13}.

In this paper, our focus is on the powers of the quadratic form
\[
q_n=x_1^2+\dots+x_n^2.
\]
Determining the rank of this form is also a classical problem that is related to the Laplace operator. Decompositions of this form as sums of powers of linear polynomials have appeared in various classical papers and books, even dating back to the 19\textsuperscript{th} century (see, for example, \cite{Hou77}{Questions 38-39, p.~129}). Many considerations and examples are presented in \cite{Rez92} by B.~Reznick, who specifically focuses on real decompositions, also known as \textit{representations}. The problem of determining the rank for the case of three variables is still open (with the exception of some specific cases), but in this paper we concentrate on its border rank. To do this, we require an understanding of classical apolarity theory and a relatively recent concept introduced by W.~Buczyńska and J.~Buczyński in \cite{BB21}, called \textit{border apolarity}. Their results provide a method to study the border rank of a tensor in a similar manner to the classic \textit{apolarity lemma} (see,  for example, \cite{IK99}*{Lemma 1.15}).

We begin by recalling some essential elements of apolarity theory in \autoref{section apolarity}, including apolarity action, catalecticant map and apolar ideal. For detailed information, we refer the readers to \cite{IK99}. Then, \autoref{section harmonic polynomials} is focused on the space of harmonic polynomials, which has a special role for the decompositions we are dealing with. In fact, we will prove in \autoref{section apolar ideal}
(refer to \autoref{Teo Apolar ideal}) that the apolar ideal of the form $q_n^s$ corresponds exactly to the ideal generated by harmonic polynomials of degree $s+1$.
Understanding the apolar ideal is essential to establish the minimality of the border rank of the form $q_3^s$. It is a well-known fact that the rank and border rank of a symmetric tensor are always greater or equal to the rank of its most-squared catalecticant matrix. B.~Reznick has already demonstrated in \cite{Rez92} that the rank of each catalecticant matrix of $q_n^s$ is maximum, leading to the following inequalities:
\begin{equation}
\label{relation introduction}
\rk\bigl(q_n^s\bigr)\geq\brk\bigl(q_n^s\bigr)\geq\binom{s+n-1}{n-1}.
\end{equation}
Using border apolarity, we prove that the second inequality in formula \eqref{relation introduction} is an equality for the case of three variables. Specifically, we determine the border rank of every power of any ternary quadratic form, including degenerate forms. In particular, given a degenerate quadratic form $g\in S^2\bbC^3$, if $\rk g=1$, then $\rk \bigl(g^s\bigr)=1$ as well. On the other hand, if $\rk g=2$, we employ binary forms to establish that
\[
\rk\bigl(g^s\bigr)=\brk\bigl(g^s\bigr)=s+1.
\]
We summarize it in the following theorem.
\begin{teor}
Let $g\in S^2\bbC^3$ be an arbitrary ternary quadratic form. For every $s\in\bbN$, if $g$ is degenerate, then 
\[
\brk \bigl(g^s\bigr)=\rk \bigl(g^s\bigr)=\binom{s+\rk g-1}{\rk g-1}.
\]
Otherwise,
\[
\brk\bigl(g^s\bigr)=\binom{s+2}{2}=\frac{(s+1)(s+2)}{2}.
\]
\end{teor}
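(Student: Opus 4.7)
The argument naturally splits according to $\rk g$. For $\rk g = 1$, write $g = \ell^2$ so that $g^s = \ell^{2s}$ is itself a $(2s)$-th power of a linear form, and both the rank and border rank equal $1 = \binom{s}{0}$. For $\rk g = 2$, a linear change of variables places $g$ inside the subalgebra $\bbC[y_1,y_2] \subseteq \bbC[x_1,x_2,x_3]$, so that $g^s$ is a binary form of degree $2s$. For binary forms, rank and border rank can be read off the apolar ideal by Sylvester's theorem; after further normalising $g$ to $y_1 y_2$, the apolar ideal of $g^s$ in degree $s+1$ is spanned by $\partial_{y_1}^{s+1}$ and $\partial_{y_2}^{s+1}$, and a generic combination $a\partial_{y_1}^{s+1}+b\partial_{y_2}^{s+1}$ has $s+1$ distinct roots on $\bbP^1$. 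This yields a Waring decomposition into $s+1$ powers, and combined with the catalecticant lower bound gives $\rk(g^s) = \brk(g^s) = s+1 = \binom{s+1}{1}$.

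For the non-degenerate case, up to a linear change of coordinates we may assume $g = q_3$. The inequality $\brk(q_3^s) \geq \binom{s+2}{2}$ has already been recorded in \eqref{relation introduction} via the middle catalecticant, so it remains to establish the matching upper bound $\brk(q_3^s) \leq r := \binom{s+2}{2}$.

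The plan is to apply the border apolarity criterion of Buczyńska-Buczyński: it suffices to exhibit a saturated homogeneous ideal $I \subseteq (q_3^s)^\perp$ in $S = \bbC[\partial_1,\partial_2,\partial_3]$ whose Hilbert function agrees with that of $r$ general points of $\bbP^2$, namely
\[
H_{S/I}(i) = \min\left\{\binom{i+2}{2},\, r\right\},
\]
and whose corresponding point of the multigraded Hilbert scheme lies in the irreducible component whose general member is the ideal of $r$ distinct reduced points. Using \autoref{Teo Apolar ideal}, which identifies $(q_3^s)^\perp$ with the ideal generated by the space $\calH_{s+1}$ of harmonic polynomials of degree $s+1$ (of dimension $2s+3$), the candidate $I$ is forced to satisfy $I_i = 0$ for $i \leq s$, and the only freedom is in the choice of $I_{s+1}$, which must be an $(s+2)$-dimensional subspace of $\calH_{s+1}$; the higher graded pieces $I_j$ for $j \geq s+2$ are then generated from $I_{s+1}$ by multiplication (together with possible extra generators needed to enforce the Hilbert function).

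The main difficulty, where I expect the bulk of the work, is twofold. First, one must identify a specific $(s+2)$-dimensional subspace $V \subseteq \calH_{s+1}$ so that the ideal generated by $V$ has the prescribed Hilbert function; this reduces to controlling the ranks of the multiplication maps $S^{j} \otimes V \to S^{s+1+j}$ for all $j \geq 1$ and ensuring that $H_{S/I}$ stabilises at $r$ from degree $s$ onward. The cleanest way to carry out this computation is via the $\SO_3$-action fixing $q_3$: although $\calH_{s+1}$ is $\SO_3$-irreducible and so $V$ cannot itself be $\SO_3$-invariant, $V$ can be chosen to be invariant under a proper subgroup (for instance a maximal torus or $\SO_2 \subset \SO_3$), and the Clebsch-Gordan decomposition of $S^k \otimes \calH_{s+1}$ then makes the ranks of the multiplication maps computable. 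Second, one must check that the resulting ideal $I$ lies in the smoothable irreducible component of the multigraded Hilbert scheme, i.e.\ is a flat limit of ideals of $r$ distinct points of $\bbP^2$; this is the distinctive \emph{a posteriori} step of the border apolarity method, achieved by exhibiting an explicit one-parameter family of reduced $r$-point configurations (conveniently placed using the residual symmetry of $q_3$) whose flat limit at $t=0$ is $I$. Granted both verifications, the Buczyńska-Buczyński lemma yields $\brk(q_3^s) \leq r$, completing the proof.
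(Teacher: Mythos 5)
Your high-level strategy matches the paper's: dispose of the degenerate cases by reduction to binary forms, and for the non-degenerate case apply border apolarity by producing a saturated ideal $I\subseteq\bigl(q_3^s\bigr)^\perp$ lying in $\mathrm{Slip}_{r,3}$ with $r=\binom{s+2}{2}$. Your treatment of the rank-$1$ and rank-$2$ cases is fine. Where you diverge is at precisely the two steps you flag as ``the bulk of the work,'' and in both the paper has a structural shortcut that your blueprint does not anticipate.

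For $\mathrm{Slip}$ membership you propose constructing an explicit one-parameter family of $r$ reduced points degenerating to $I$. The paper bypasses this entirely via Mańdziuk's observation (\autoref{prop Slip}): since the Hilbert scheme of $r$ points in $\bbP^2$ is irreducible (Fogarty), any \emph{saturated} ideal with Hilbert function $h_{r,2}$ automatically lies in $\mathrm{Slip}_{r,2}$. Thus the whole a posteriori step collapses to proving that $I$ is saturated with the right Hilbert function; no degeneration is built. Without knowing this, the step you describe would indeed be a major undertaking.

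For the Hilbert function you propose computing ranks of the multiplication maps $S^j\otimes V\to S^{s+1+j}$ via Clebsch--Gordan. This does not apply as stated: $V\subseteq\calH_3^{s+1}$ cannot be $\SO_3$-invariant (as you observe), and a merely torus- or Borel-invariant $V$ does not make the \emph{restricted} multiplication maps split $\SO_3$-equivariantly, so Clebsch--Gordan decomposes the ambient tensor product but does not hand you the rank of the restriction. The paper instead takes $V$ to be the span of the weight vectors $p_{s+1,j}$ for $0\le j\le s+1$ (a Borel-invariant half of the weight ladder) and verifies by Buchberger's criterion that these form a Gr\"obner basis for $I$ in the lex order $z>u>v$, with leading ideal $(u,z)^{s+1}$ --- the saturated ideal of a fat point. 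By \autoref{lem 4.12Man}, saturation of the leading ideal forces $I$ to be saturated, and $\HF_{S/I}=\HF_{S/(u,z)^{s+1}}$, which equals $\binom{s+2}{2}$ in all degrees $\ge s$. So both of your difficulties are settled simultaneously by a single leading-ideal computation. Finally, your parenthetical about ``possible extra generators needed to enforce the Hilbert function'' is unnecessary and would risk leaving $\bigl(q_3^s\bigr)^\perp$: the ideal generated by $V$ alone already has the required Hilbert function.
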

The strategy we employ to prove this theorem, detailed in \autoref{section border rank}, is based on the aforementioned border apolarity techniques (see \cite{BB21}).
Another concept that we utilize is the \textit{multigraded Hilbert scheme}, introduced by M.~Haiman and B.~Sturmfels in \cite{HS04}. It is  a scheme parametrizing all ideals in a polynomial ring that are homogeneous and have a fixed Hilbert function with respect to a grading by an abelian group. 

Our focus is on the multigraded Hilbert scheme 
$\mathrm{Hilb}^{\scriptscriptstyle{h_{r,n}}}_{{\scriptscriptstyle S[\bbP^n]}}$,
where $S[\bbP^n]$ is the standard homogeneous coordinate ring of $\bbP^n$, and $h_{r,n}$ is defined as
\[
\begin{tikzcd}[row sep=0pt,column sep=1pc]
 h_{r,n}\colon \bbZ\arrow{r} & \bbZ\hphantom{,} \\
  {\hphantom{h_{r,n}\colon{}}} a \arrow[mapsto]{r} & \min\bigl\{\dim S[\bbP^n]_a,r\bigr\},
\end{tikzcd}
\]
representing the Hilbert function of the homogeneous coordinate ring of $r$ generic points in $\bbP^n$. In \cite{BB21}*{Notation 3.14}, W.~Buczyńska and J.~Buczyński define a specific irreducible component of $\mathrm{Hilb}^{\scriptscriptstyle h_{r,n}}_{\scriptscriptstyle S[\bbP^n]}$, denoted by $\mathrm{Slip}_{r,n}$, which corresponds to the closure of the set of the saturated ideals of $r$ distinct points in $X$, denoted by $\mathrm{Sip}_{r,n}$.

According to the statement of the border apolarity theorem (see \cite{BB21}*{Theorem 3.15}, the border rank of a homogeneous polynomial $f$ is at most $r\in\bbN$ if and only if there exists a limit $I\in\mathrm{Slip}_{r,n}$ of ideals of $r$ points such that $I\subseteq f^{\perp}$.

In \autoref{section border rank} we determine such an ideal $I_{s+1}$ for every $s\in\bbN$, satisfying the conditions
\[
I_{s+1}\in\mathrm{Slip}_{r,3},
\] 
where $r=(s+1)(s+2)/2$, and
\[
I_{s+1}\subseteq\bigl(q_3^s\bigr)^{\perp}.
\] 
The structure of this ideal remains similar for every power $s$ and it can be described just introducing the irreducible representations of the Lie algebra $\mfso_3\bbC$ of the special orthogonal group $\SO_3(\bbC)$.

\section{Preliminaries}
\label{section apolarity}
\noindent
Let us consider a finite-dimensional vector space $V$ of dimension $n\in\bbN$ over $\bbC$ and the symmetric algebras
\[
\calR=S\big(V\big),\qquad \calD=S\big(V^*\big).
\]
We can naturally define, for every $d\in\bbN$, a bilinear map
\[
\begin{tikzcd}[row sep=0pt,column sep=1pc]
 \circ\colon S^dV^*\times S^dV\arrow{r} & \bbC\hphantom{,} \\
  {\hphantom{\circ\colon{}}}  (\phi_1\cdots\phi_d, v_1\cdots v_d) \arrow[mapsto]{r} & \displaystyle{\sum_{\sigma\in\mfS_d}} \phi_1(v_{\sigma(1)})\cdots\phi_d(v_{\sigma(d)}),
\end{tikzcd}
\]
also known as the \textit{contraction pairing} or \textit{polar pairing}. This can also be generalized to the $(k,d)$-\textit{partial polarization map}, defined for $k\leq d$ by
\begin{equation}
\label{polarization map}
\begin{tikzcd}[row sep=0pt,column sep=1pc]
 \circ\colon S^kV^*\times S^dV\arrow{r} & S^{d-k}V\hphantom{.} \\
  {\hphantom{\circ\colon{}}}  (\phi_1\cdots\phi_k, v_1\cdots v_d) \arrow[mapsto]{r} & \displaystyle{\sum_{1\leq i_1<\cdots<i_k\leq d}} (\phi_1\cdots\phi_k)\circ \bigl(v_{i_1}\cdots v_{i_k}\bigr)\prod_{j\neq i_1,\dots,i_k}v_j.
\end{tikzcd}
\end{equation}
Once we fix a basis $\{x_1,\dots,x_n\}$ of $V$ and its dual basis $\{y_1,\dots,y_n\}$ of $V^*$, we can identify $S(V)$ and $S(V^*)$ respectively with the polynomial rings
\[
\calR\simeq\bbC[x_1,\dots,x_n],\quad \calD\simeq\bbC[y_1,\dots,y_n].
\]
This allows us to compute the image of each couple of monomials through polarization map:
\[
\bfy^{\alpha}\circ\bfx^{\beta}=
\begin{cases}
\dfrac{\beta!}{(\beta-\alpha)!}\bfx^{\beta-\alpha}\quad &\text{if $\beta-\alpha\geq 0$},\\[2ex]
0\quad &\text{otherwise},
\end{cases}
\]
for every $\alpha,\beta\in\bbN^n$, where we use the notation 
\[
\bfx^{\delta}=x_1^{\delta_1}\dots x_n^{\delta_n},\quad \bfy^{\delta}=y_1^{\delta_1}\dots y_n^{\delta_n},\quad \delta!=\delta_1!\cdots\delta_n!
\]
for every multi-index $\delta=(\delta_1,\dots,\delta_n)\in\bbN^n$ (for further details about apolarity, we refer to \cite{Dol12}*{Chapter 1}). These considerations lead to the identification of the space $\calD$ with the space of polynomial differential operators.
\begin{defn}
Given any homogeneous polynomial $\phi\in \calD_k$, with $k\leq d$, the operator
\[
\begin{tikzcd}[row sep=0pt,column sep=1pc]
 \Der_\phi\colon \calR_d\arrow{r} & \calR_{d-k} \\
  {\hphantom{\Der_\phi\colon{}}} h \arrow[mapsto]{r} & \phi\circ h
\end{tikzcd}
\]
is called the \textit{differential operator} associated to $\phi$.
\end{defn}
The partial polarization map naturally induces an action of the space $\calD$ on the space $\calR$, considering their elements as polynomials.
\begin{defn}
The \textit{apolarity action} of $\calD$ on $\calR$ is defined by extending the polarization maps linearly to each component of $\calD$ and $\calR$, as the map
\[
\begin{tikzcd}[row sep=0pt,column sep=1pc]
 \circ\colon \calD\times\calR\arrow{r} & \calR\hphantom{.} \\
  {\hphantom{\circ\colon{}}}  (\phi,f) \arrow[mapsto]{r} & \Der_\phi(f).
\end{tikzcd}
\]
\end{defn}
\begin{rem}
\label{remark reversing roles}
In the case of contraction pairing, and in general when dealing with dual spaces $\calR$ and $\calD$, we can reverse the roles of variables $x_i$ and $y_i$, establishing an identification between the components $\calR_j$ and $\calD_j$. Thus, we can consider the contraction pairing as a symmetric bilinear form.
\end{rem}

Considering a specific homogeneous polynomial, we can provide the following definition, based on the apolarity action of the space $\calD$ on it.
\begin{defn}
For every homogeneous polynomial $f\in\calR_d$, the \textit{catalecticant map} of $f$ is defined as the linear map
\[
\begin{tikzcd}[row sep=0pt,column sep=1pc]
 \Cat_f\colon \calD\arrow{r} & \calR\hphantom{.} \\
  {\hphantom{\Cat_f\colon{}}} \phi \arrow[mapsto]{r} & \phi\circ f.
\end{tikzcd}
\]
The \textit{apolar ideal} of the polynomial $f$ is defined as the kernel of $\Cat_f$ and corresponds to the set
\[
f^{\perp}=\Set{g\in\calD|g\circ f=0}.
\]
\end{defn}
It is clear that the catalecticant map is graded, meaning that the map
\[
\begin{tikzcd}[row sep=0pt,column sep=1pc]
 \Cat_f^j\colon \calD_j\arrow{r} & \calR_{d-j} \\
  {\hphantom{\Cat_f\colon{}}} g \arrow[mapsto]{r} & g\circ f,
\end{tikzcd}
\]
also referred to as the \textit{$j$-th catalecticant}, is well-defined. The inequality in the next proposition is well known and provides a lower bound for the symmetric rank of a homogeneous polynomial. It is classically attributed to J.~J.~Sylvester (see \cite{Syl51}) and appears in several texts (see e.g.~\cite{Lan12}*{Proposition 3.5.1.1}).
\begin{prop}
\label{prop lower bound}
If $f\in\calR_d$, then
\[
\rk f\geq\brk f\geq\rk\bigl(\Cat_f^j\bigr)
\]
for every $0\leq j\leq d$.
\end{prop}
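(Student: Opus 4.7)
The first inequality $\rk f \geq \brk f$ is immediate from the definitions, since any Waring decomposition of $f$ into $r$ powers of linear forms realizes $f$ as a constant sequence converging to itself, whence the border rank cannot exceed the rank. The substantive content is the bound $\brk f \geq \rk\bigl(\Cat_f^j\bigr)$, and the plan is to derive it via two standard ingredients: the behavior of the catalecticant on a single power of a linear form, and lower semicontinuity of matrix rank.

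First I would verify the base case: if $\ell \in V = \calR_1$ is a linear form, then the catalecticant $\Cat_{\ell^d}^j \colon \calD_j \to \calR_{d-j}$ has rank exactly $1$ (or $0$, if $\ell = 0$). This follows directly from the formula for the polarization map in \eqref{polarization map}, which shows that $\phi \circ \ell^d$ is a scalar multiple of $\ell^{d-j}$ for every $\phi \in \calD_j$; hence the image of $\Cat_{\ell^d}^j$ is contained in the one-dimensional subspace $\bbC \cdot \ell^{d-j}$.

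Next, by linearity of the apolarity action in its second argument, the assignment $f \mapsto \Cat_f^j$ is itself a linear map from $\calR_d$ to $\Hom(\calD_j,\calR_{d-j})$. Consequently, whenever $f$ admits a Waring decomposition $f = \sum_{i=1}^{r} \ell_i^d$, one has $\Cat_f^j = \sum_{i=1}^r \Cat_{\ell_i^d}^j$, and subadditivity of matrix rank, together with the previous step, yields $\rk\bigl(\Cat_f^j\bigr) \leq r$. This proves the inequality $\rk f \geq \rk\bigl(\Cat_f^j\bigr)$ and reduces the proposition to an approximation argument.

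Finally, to conclude the border rank bound, I would invoke lower semicontinuity of rank: the locus $\{M : \rk M \leq r\}$ is Zariski closed (cut out by the vanishing of the $(r+1)\times(r+1)$ minors), hence closed in the Euclidean topology as well. If $\brk f \leq r$, then by definition $f$ is a limit of polynomials $f_{\varepsilon}$ with $\rk f_{\varepsilon} \leq r$; since the linear map $f \mapsto \Cat_f^j$ is continuous, $\Cat_f^j$ is the limit of the $\Cat_{f_{\varepsilon}}^j$, each of rank at most $r$ by the previous paragraph, and lower semicontinuity gives $\rk\bigl(\Cat_f^j\bigr) \leq r$. Taking $r = \brk f$ completes the proof. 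There is no genuine obstacle here; the only point requiring care is ensuring that the chosen definition of border rank is compatible with the closedness argument, which is routine once one recalls that the $r$-th secant variety of the Veronese is a closed subvariety of $\bbP(\calR_d)$.
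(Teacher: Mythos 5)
Your argument is correct and complete: the rank-one computation for $\Cat_{\ell^d}^j$, linearity of $f\mapsto\Cat_f^j$ together with subadditivity of rank, and lower semicontinuity of matrix rank along the approximating family are exactly the standard ingredients for this classical bound. Note, however, that the paper does not give a proof of this proposition at all; it cites it as well known, attributing it to Sylvester and pointing to \cite{Lan12}*{Proposition 3.5.1.1}, so there is no ``paper's approach'' to compare against. You have supplied a correct self-contained proof of the cited fact via the usual route. One minor point of hygiene: when you say ``$f$ is a limit of polynomials $f_\varepsilon$ with $\rk f_\varepsilon\leq r$,'' it is cleaner to appeal directly to the secant-variety definition of border rank (as you do at the end), since that is what guarantees the limiting objects exist inside a Zariski-closed set where the catalecticant-rank bound holds identically.
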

The theorem of \textit{border apolarity}, described in \cite{BB21} provides an alternative version of the classic apolarity lemma (see \cite{IK99}*{Lemma 1.15}). Their results pertain to all smooth projective toric varieties, but for our purposes, we only require its version for the Veronese variety. Considering the multigraded Hilbert scheme 
$\mathrm{Hilb}^{\scriptscriptstyle{h_{r,n}}}_{{\scriptscriptstyle S[\bbP^n]}}$, we denote by 
\[
\mathrm{Sip}_{r,n}\subseteq\mathrm{Hilb}^{\scriptscriptstyle{h_{r,n}}}_{{\scriptscriptstyle S[\bbP^n]}}
\]
the subset consisting of saturated ideals of $r$ distinct
points in $\bbP^n$ (the abbreviation \textit{sip} stands for \textit{set of ideals of points}), that has a particular topological property in $\mathrm{Hilb}^{\scriptscriptstyle{h_{r,n}}}_{{\scriptscriptstyle S[\bbP^n]}}$.
\begin{prop}[\cite{BB21}*{Proposition 3.13}]
\label{prop unique component}
There is a unique component of the multigraded Hilbert scheme that
contains $\mathrm{Sip}_{r,n}$ as a dense subset.
\end{prop}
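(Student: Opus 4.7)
The plan is to realize $\mathrm{Sip}_{r,n}$ as the image of an irreducible parameter space, deduce that its closure lies in a unique component of the multigraded Hilbert scheme, and then check that this closure is in fact a whole component via a tangent space computation.

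First I would consider the open subset $U\subseteq (\bbP^n)^r$ consisting of ordered $r$-tuples $(p_1,\dots,p_r)$ of pairwise distinct points. Being an open subset of the irreducible variety $(\bbP^n)^r$, it is itself irreducible of dimension $rn$. The assignment
\[
(p_1,\dots,p_r)\longmapsto I(p_1)\cap\cdots\cap I(p_r)
\]
extends, via the universal property of the multigraded Hilbert scheme, to a morphism $\varphi\colon U\to\mathrm{Hilb}^{h_{r,n}}_{S[\bbP^n]}$ whose image is precisely $\mathrm{Sip}_{r,n}$ and which factors through the quotient by the $\mathfrak{S}_r$-action permuting the points. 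Since the image of an irreducible set under a morphism is irreducible, $\mathrm{Sip}_{r,n}$ is irreducible as a constructible subset.

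Its closure $\overline{\mathrm{Sip}_{r,n}}$ is therefore an irreducible closed subset of the multigraded Hilbert scheme, and in any Noetherian scheme an irreducible closed subset is contained in a unique irreducible component $C$. This already settles uniqueness: any component containing $\mathrm{Sip}_{r,n}$ contains its closure, and must then coincide with $C$ by irreducibility. What remains is to show that $\mathrm{Sip}_{r,n}$ is actually dense in $C$, equivalently that $\overline{\mathrm{Sip}_{r,n}}=C$. For this I would verify that the saturated ideal of $r$ general distinct points $p_1,\dots,p_r$ is a smooth point of the multigraded Hilbert scheme, with tangent space canonically identified with $\bigoplus_{i=1}^{r}T_{p_i}\bbP^n$ via the standard normal-bundle description of deformations; this space has dimension $rn$. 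Since this matches $\dim\mathrm{Sip}_{r,n}$, the irreducible closed set $\overline{\mathrm{Sip}_{r,n}}$ and the component $C$ share a smooth point of matching dimension, forcing the two to coincide.

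The main obstacle lies precisely in this last step: transferring the classical Hilbert scheme tangent space computation to the multigraded Hilbert scheme of Haiman--Sturmfels with the specific Hilbert function $h_{r,n}$, and checking that the infinitesimal deformations preserving $h_{r,n}$ are exactly those obtained by moving the $r$ points independently. Once this local deformation-theoretic calculation is in place, the remaining ingredients assemble formally to yield both the existence and the uniqueness of the component in the statement.
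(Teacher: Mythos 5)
The statement in question is cited directly from \cite{BB21}*{Proposition 3.13}; the paper does not reproduce a proof, so the comparison must be against the argument in \cite{BB21}. There, the essential mechanism is to exhibit $\mathrm{Sip}_{r,n}$ as an irreducible, \emph{open} subset of the multigraded Hilbert scheme; an open irreducible subset of a Noetherian scheme automatically has closure equal to an irreducible component, which gives both existence and uniqueness in one stroke. You instead propose irreducibility of $\mathrm{Sip}_{r,n}$ plus a tangent space bound at a general point. This is a legitimate alternative route in principle, but as written it has several real gaps.

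First, and most substantively, the morphism $\varphi\colon U\to\mathrm{Hilb}^{h_{r,n}}_{S[\bbP^n]}$ is not defined on all of the locus $U$ of pairwise distinct $r$-tuples. The multigraded Hilbert scheme only parametrizes ideals with the \emph{fixed} Hilbert function $h_{r,n}$, and the saturated ideal of $r$ distinct but degenerate points (e.g.\ collinear ones when $r\geq 3$, $n\geq 2$) fails to have this Hilbert function. You must restrict to the (dense, open) sublocus of $U$ where the associated ideal has generic Hilbert function, both to land in $\mathrm{Hilb}^{h_{r,n}}_{S[\bbP^n]}$ and for the family of ideals to be flat over the base. This affects the domain but not the eventual conclusion, since irreducibility and the dimension count survive.

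Second, the assertion that ``in any Noetherian scheme an irreducible closed subset is contained in a unique irreducible component'' is false: a line inside the union of two planes meeting along it lies in both components. The conclusion you actually need is much weaker and holds trivially: a component containing $\mathrm{Sip}_{r,n}$ \emph{as a dense subset} must be $\overline{\mathrm{Sip}_{r,n}}$, so there is at most one such. Nothing is lost, but the intermediate claim should be deleted.

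Third, and as you explicitly acknowledge, the tangent space step is left as an obstacle rather than proved. The tangent space to the multigraded Hilbert scheme at $[I]$ is the degree-zero part $\Hom_S(I,S/I)_0$. For $I$ the saturated ideal of $r$ \emph{general} reduced points $Z$, one can indeed show this equals $H^0\bigl(\Hom(\calI_Z,\calO_Z)\bigr)=\bigoplus_i T_{p_i}\bbP^n$: injectivity of the sheafification map $\Hom_S(I,S/I)_0\to H^0(N_{Z})$ follows from $I$ being saturated (so $S/I$ has no $\mathfrak m$-torsion), and surjectivity uses that for general points the inclusion $(S/I)_d\hookrightarrow H^0(\calO_Z(d))$ is either trivially satisfied ($I_d=0$ when $\dim S_d\leq r$) or an isomorphism ($\dim S_d>r$). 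This genuinely uses the genericity of the Hilbert function; it is not a formal transfer from the classical Hilbert scheme and must be carried out. Once done, your dimension match argument closes the proof. So the approach is workable, but as it stands the crucial computation is absent and the domain of $\varphi$ is incorrectly described.
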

The component defined in \autoref{prop unique component} is denoted by $\mathrm{Slip}_{r,n}$ and any ideal belonging to it is a limit of ideals of points (in this case the abbreviation \textit{slip} stands for \textit{scheme of limits of ideals of points}), that is,
\[
\mathrm{Slip}_{r,n}=\overline{\mathrm{Sip}_{r,n}}.
\]
The border apolarity theorem relates the border rank of a homogeneous form to the presence of a limit of ideal of points in its apolar ideal.
\begin{teo}[Border apolarity \cite{BB21}*{Theorem 3.15}]
\label{teo border apolarity}
Let $f\in S^d\bbC^n$ be a homogeneous polynomial of degree $d$. Then $\brk f\geq r$ if and only if there exists a limit $I\in\mathrm{Slip}_{r,n}$ of ideals of $r$-tuples of points such that $I\subseteq f^{\perp}$.
\end{teo}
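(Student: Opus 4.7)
The plan is to establish the equivalence (interpreting the inequality in the intended direction $\brk f \leq r$, as in the introduction) by combining the classical apolarity lemma for reduced zero-dimensional schemes with the projectivity of the multigraded Hilbert scheme $\mathrm{Hilb}^{h_{r,n}}_{S[\bbP^n]}$, whose closed subscheme $\mathrm{Slip}_{r,n}$ (see \autoref{prop unique component}) is therefore also projective.

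For the direction $\brk f \leq r \Rightarrow$ existence of $I$: I would unfold the border rank definition to obtain a family $f_t = \sum_{i=1}^{r} \ell_i(t)^{d}$ with $f_t \to f$ as $t \to 0$, and arrange by a harmless perturbation that for $t \neq 0$ the linear forms $\ell_i(t)$ are pairwise non-proportional. Then the ideal $I(Z_t)$ of the reduced scheme $Z_t = \{[\ell_1(t)], \dots, [\ell_r(t)]\}$ lies in $\mathrm{Sip}_{r,n}$, and the classical apolarity lemma yields $I(Z_t) \subseteq f_t^{\perp}$. Projectivity of $\mathrm{Slip}_{r,n}$ lets the punctured curve $t \mapsto I(Z_t)$ be completed over $t=0$ by some limit $I_0 \in \mathrm{Slip}_{r,n}$, and the bilinear condition $g \circ f = 0$ is closed in $\mathrm{Slip}_{r,n} \times S^{d} V$, so it is preserved in the limit, giving $I_0 \subseteq f^{\perp}$.

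For the converse, starting from $I \in \mathrm{Slip}_{r,n}$ with $I \subseteq f^{\perp}$, I would use the density of $\mathrm{Sip}_{r,n}$ in $\mathrm{Slip}_{r,n}$ to produce a curve $I(t)$ with $I(t) = I(Z_t) \in \mathrm{Sip}_{r,n}$ for $t \neq 0$ and $I(0) = I$. Form the degree-$d$ apolar subspaces $V_t := (I(t)_d)^{\perp} \subseteq S^{d} V$ under the apolarity pairing; since the graded Hilbert function along the family is the constant $h_{r,n}$, the dimension $\dim V_t$ is constant, so $t \mapsto V_t$ defines a morphism into a Grassmannian and $V_0 = \lim_{t \to 0} V_t$. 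For $t \neq 0$ the classical apolarity lemma identifies $V_t$ with the linear span $\langle \ell_1(t)^{d}, \dots, \ell_r(t)^{d} \rangle$, whose elements have Waring rank at most $r$. The hypothesis $I \subseteq f^{\perp}$ forces $f \in V_0$ by taking perps in degree $d$, and Grassmannian continuity produces $f_t \in V_t$ with $f_t \to f$; each $f_t$ has rank at most $r$, whence $\brk f \leq r$.

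The main obstacle is not located in either implication \emph{per se} but in the structural input one must borrow from Haiman--Sturmfels and from \cite{BB21}: namely, that $\mathrm{Hilb}^{h_{r,n}}_{S[\bbP^n]}$ is projective, so that limits of $r$-point ideals exist as honest elements of $\mathrm{Slip}_{r,n}$, and that the degree-$d$ apolar spaces $V_t$ assemble into a genuine flat family over $\mathrm{Slip}_{r,n}$, so that the Grassmannian continuity argument in the converse is legitimate. Granted these facts, both directions reduce to tracking the closed bilinear condition $g \circ f = 0$ along a one-parameter degeneration and applying the classical apolarity lemma at the generic reduced fiber.
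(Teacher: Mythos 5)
First, a point of comparison: the paper does not prove this statement at all --- it is quoted verbatim from Buczy\'nska--Buczy\'nski (\cite{BB21}, Theorem 3.15) and used as a black box (and the printed ``$\brk f\geq r$'' is indeed a slip for ``$\brk f\leq r$'', as you correctly inferred from the introduction). So your sketch is being measured against the source's argument rather than anything in this paper. Your converse direction (from $I\in\mathrm{Slip}_{r,n}$ with $I\subseteq f^{\perp}$ to $\brk f\leq r$) is essentially the standard argument and is sound modulo the structural facts you explicitly flag: constancy of the Hilbert function makes $t\mapsto I(t)_d$ a morphism to a Grassmannian, $(I(Z_t)_d)^{\perp}=\langle \ell_1(t)^d,\dots,\ell_r(t)^d\rangle$ for reduced points with generic Hilbert function, and $f\in (I_d)^{\perp}$ by double perpendicularity.

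The forward direction, however, has a genuine gap as written. You claim that after a ``harmless perturbation'' making the $\ell_i(t)$ pairwise non-proportional, the saturated ideal $I(Z_t)$ lies in $\mathrm{Sip}_{r,n}$. That is false in general: $\mathrm{Sip}_{r,n}$ sits inside the multigraded Hilbert scheme $\mathrm{Hilb}^{h_{r,n}}_{S[\bbP^n]}$, so membership requires the saturated ideal to have exactly the \emph{generic} Hilbert function $h_{r,n}$, i.e.\ the $r$ points must impose independent conditions in every degree. Distinctness does not guarantee this --- e.g.\ four collinear points in $\bbP^2$ have Hilbert function $1,2,3,4,\dots$ rather than $1,3,4,4,\dots$ --- and for such special configurations $I(Z_t)$ is not even a point of $\mathrm{Hilb}^{h_{r,n}}_{S[\bbP^n]}$, so your curve $t\mapsto I(Z_t)$ into $\mathrm{Slip}_{r,n}$ is simply undefined. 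The step can be repaired, but it needs an argument, not a perturbation of the $\ell_i$ alone: one must either show that the rank-$\leq r$ locus is already the closure of the image of configurations in general position (the general-position locus is dense open in $(\bbP^{n-1})^{\times r}$, so one can choose the curve germ in the incidence variety to have generic member there), or follow \cite{BB21} in replacing the saturated ideal of a special configuration by a suitable ideal with Hilbert function $h_{r,n}$. Once the generic fiber of the curve genuinely lies in $\mathrm{Sip}_{r,n}$, your use of properness of the multigraded Hilbert scheme and closedness of the condition $I\subseteq f^{\perp}$ (degree by degree, on the vector bundles of graded pieces) does complete the argument.
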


\section{Harmonic polynomials}
\label{section harmonic polynomials}
\noindent
To continue our analysis of the form $q_n^s$, we need to introduce a particular class of polynomials known as harmonic polynomials. It can be helpful to identify the space $\calD$ with the space of polynomial differential operators, as discussed in \autoref{section apolarity}. 

We observe that the kernel of a homogeneous differential operator is related to the contraction pairing, as it can be seen as a specific orthogonal complement of the space $\phi\calD_{d-k}$.
\begin{prop}
\label{prop ker differential}
Let $\circ\colon\calD_d\times\calR_d\to\bbC$ be the contraction pairing of degree $d\in\bbN$. Then, for every $k\leq d$ and for every polynomial $\phi\in \calD_{k}$,
\[
\Ker(\Der_\phi)=(\phi\calD_{d-k})^\perp\subseteq\calR_d.
\]
\end{prop}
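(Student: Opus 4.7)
The plan is to reduce the kernel statement to an orthogonality statement via the non-degeneracy of the contraction pairing, and then use the fact that the apolarity action is multiplicative in the first argument.

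First, I would recall that the contraction pairing $\circ\colon \calD_{d-k}\times\calR_{d-k}\to\bbC$ is non-degenerate: in the monomial basis, $\bfy^{\alpha}\circ\bfx^{\beta}=\alpha!\,\delta_{\alpha,\beta}$ for $|\alpha|=|\beta|=d-k$, so a polynomial $h\in\calR_{d-k}$ is zero if and only if $\psi\circ h=0$ for every $\psi\in\calD_{d-k}$. Applying this to $h=\phi\circ f$ for $f\in\calR_d$, one obtains
\[
f\in\Ker(\Der_\phi)\iff \phi\circ f=0\iff \psi\circ(\phi\circ f)=0\ \text{for every } \psi\in\calD_{d-k}.
\]

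The second step is to verify the associativity of the apolarity action: $\psi\circ(\phi\circ f)=(\psi\phi)\circ f$ for all $\psi\in\calD_{d-k}$, $\phi\in\calD_k$, $f\in\calR_d$. This is the assertion that the apolarity pairing endows $\calR$ with the structure of a graded $\calD$-module. By bilinearity it suffices to check the identity on monomials, where it follows immediately from the explicit formula $\bfy^{\alpha}\circ\bfx^{\beta}=\frac{\beta!}{(\beta-\alpha)!}\bfx^{\beta-\alpha}$ and a short index manipulation (equivalently, one interprets $\calD$ as the ring of constant-coefficient differential operators acting on $\calR$, whence associativity is automatic). Combining the two steps,
\[
f\in\Ker(\Der_\phi)\iff (\psi\phi)\circ f=0\ \text{for every }\psi\in\calD_{d-k}\iff f\perp \phi\calD_{d-k}.
\]
This is exactly the statement $\Ker(\Der_\phi)=(\phi\calD_{d-k})^\perp$.

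I do not expect any genuine obstacle here; the only technical point is the adjointness/associativity $(\psi\phi)\circ f=\psi\circ(\phi\circ f)$, and a line pointing to the monomial computation (or to the differential-operator reinterpretation) should suffice. One may also remark, as a sanity check on dimensions, that the rank-nullity theorem combined with the equality $\dim(\phi\calD_{d-k})=\dim(\phi\calD_{d-k})$ is consistent with $\dim\image(\Der_\phi)$, since the transpose of $\Der_\phi$ with respect to the contraction pairings is exactly the multiplication map $\calD_{d-k}\to\calD_d$, $\psi\mapsto\psi\phi$.
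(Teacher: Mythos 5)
Your argument is correct and follows essentially the same route as the paper's proof: both rely on the associativity $(\psi\phi)\circ f=\psi\circ(\phi\circ f)$ and on non-degeneracy of the degree-$(d-k)$ contraction pairing to pass between ``$\phi\circ f=0$'' and ``$\psi\circ(\phi\circ f)=0$ for all $\psi$.'' The only difference is one of exposition: you spell out the non-degeneracy and associativity steps explicitly (and note the transpose interpretation), while the paper leaves them implicit, invoking only the symmetry remark (\autoref{remark reversing roles}).
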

\begin{proof}
For any $g\in \calR_d$, by the argument of \autoref{remark reversing roles}, we have $g\in(\phi\calD_{d-k})^\perp$ if and only if, for every $\psi\in \calD_{d-k}$,
\[
g\circ (\phi\psi)=(\phi\psi)\circ g=0,
\] 
which implies 
\[
\psi\circ(\phi\circ g)=0.
\] 
This means that 
\[
\Der_\phi(g)=\phi\circ g=0,
\]
and hence $g\in\Ker(\Der_\phi)$.
\end{proof}
\autoref{prop ker differential} allows us to introduce the space of harmonic polynomials using the contraction pairing.
\begin{defn}
The space
\[
\calH_n^d=(q_n\calD_{d-2})^\perp=\Ker(\Der_{q_n})\subseteq \calR_d
\]
is called the space of the \textit{$d$-harmonic polynomials} or \textit{$d$-harmonic forms}.
The differential operator $\Der_{q_n}$, also denoted by $\Delta$, is referred to as the \textit{Laplace operator} and it corresponds to the operator $\Delta\colon \calR_d\to \calR_{d-2}$, defined by
\[
\Delta=\sum_{i=1}^n\pdv[2]{}{x_i}.
\]
\end{defn}
\begin{rem}
\label{Rem Euler's formula}
We can make some observations on the behavior of the Laplace operator on polynomials. 
First, for any two forms $g_1,g_2\in \calR$, it can be verified using Leibniz's rule for derivations that
\[
\Delta(g_1g_2)=\Delta(g_1)g_2+g_1\Delta(g_2)+2\sum_{j=1}^n\pdv{g_1}{x_j}\pdv{g_2}{x_j}.
\]
In particular, by recalling \textit{Euler's formula} for a polynomial $f\in S^k\bbC^n$, given by
\[
\sum_{j=1}^n x_j\pdv{f}{x_j}=kf,
\]
we obtain
\begin{align}
\label{rel_Laplace_on_q_n^s}
\Lap\big(q_n^s\big)&=\sum_{j=1}^n\pdv[2]{q_n^s}{x_j}=2s\sum_{j=1}^n\pdv*{\bigl(x_jq_n^{s-1}\bigr)}{x_j}=2s\sum_{j=1}^n\biggl(q_n^{s-1}+x_j\pdv{q_n^{s-1}}{x_j}\biggr)\\
\nonumber&=2snq_n^{s-1}+4s(s-1)\sum_{j=1}^nx_j^2q_n^{s-2}=2s\bigl(n+2(s-1)\bigr)q_n^{s-1}
\end{align}
for every $s\geq 2$.
\end{rem}
The role of harmonic polynomials is crucial in determining a decomposition of the space $\calR_d$. It is well known that the space $\calH_n^d$ is an irriducible $\SO_n(\bbC)$-module (see \cite{GW98}*{Theorem 5.2.4}) for every $d\in\bbN$. The following proposition, whose validity can be verified in \cite{GW98}, provides a decomposition of $\calR_d$ as a direct sum of irreducible representations.
\begin{prop}[\cite{GW98}*{Corollary 5.2.5}]
\label{prop decomposizione armonici}
Let be $d\in\bbN$. Then
\[
\calR_d=q_n\calR_{d-2}\oplus \calH_n^d.
\]
More precisely, we have
\[
\calR_d=\bigoplus_{j=0}^{\left\lfloor\frac{d}{2}\right\rfloor}q_n^j\calH_n^{d-2j}.
\]
\end{prop}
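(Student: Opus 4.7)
The plan is to establish the binary splitting $\calR_d=q_n\calR_{d-2}\oplus\calH_n^d$ first and then iterate it on the degree to read off the finer decomposition. The base cases $d\in\{0,1\}$ are tautological since $q_n\calR_{d-2}=0$ and $\calH_n^d=\calR_d$, so I will focus on the inductive step $d\geq 2$.

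For the splitting, I will apply \autoref{prop ker differential} to $\phi=q_n\in\calD_2$ (after identifying $\calD_j$ with $\calR_j$ as in \autoref{remark reversing roles}) to obtain the identity $\calH_n^d=\Ker\Lap=(q_n\calR_{d-2})^\perp$ inside $\calR_d$, where the orthogonal complement is taken with respect to the contraction pairing read as a non-degenerate symmetric bilinear form on $\calR_d$. Non-degeneracy then yields
\[
\dim\calR_d=\dim(q_n\calR_{d-2})+\dim\calH_n^d,
\]
so the splitting reduces to the vanishing of the intersection $q_n\calR_{d-2}\cap\calH_n^d$.

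To prove this vanishing, suppose $g\in\calR_{d-2}$ satisfies $\Lap(q_ng)=0$. Specializing the Leibniz--Euler identity from \autoref{Rem Euler's formula} to the product $q_n\cdot g$ gives
\[
\Lap(q_ng)=\bigl(2n+4(d-2)\bigr)g+q_n\Lap g,
\]
so the hypothesis forces $g\in q_n\calR_{d-4}$, i.e.\ $g=q_ng_1$ for some $g_1\in\calR_{d-4}$. Applying the same identity once more to the product $q_n\cdot q_ng_1$ produces a relation of the same shape $c_1g_1+q_n\Lap g_1=0$ with a strictly positive coefficient $c_1$, which again forces $g_1\in q_n\calR_{d-6}$. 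Iterating, each step peels off one further factor of $q_n$ while preserving the shape of the relation and producing strictly positive coefficients $c_j$; after at most $\lfloor(d-2)/2\rfloor+1$ iterations the ambient degree drops below zero, forcing $g=0$ and completing the splitting.

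Once the binary splitting is in hand for every $d$, I will iterate it: substituting $\calR_{d-2}=q_n\calR_{d-4}\oplus\calH_n^{d-2}$ inside $q_n\calR_{d-2}$ and using the injectivity of multiplication by $q_n$ in $\calR$, I get $q_n\calR_{d-2}=q_n^2\calR_{d-4}\oplus q_n\calH_n^{d-2}$; an easy induction on $d$ then produces the announced decomposition $\calR_d=\bigoplus_{j=0}^{\lfloor d/2\rfloor}q_n^j\calH_n^{d-2j}$. The main obstacle will be the bookkeeping in the iterated intersection argument, specifically verifying that the coefficients $c_j$ stay strictly positive at every descent step; this is elementary but somewhat tedious, and its conceptual origin is the $\mfsl_2$-triple structure generated on $\calR$ by multiplication by $q_n$, the Laplacian $\Lap$, and the Euler degree operator, which automatically delivers the required positivity.
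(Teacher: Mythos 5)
The paper does not prove this statement at all: it simply cites \cite{GW98}*{Corollary 5.2.5}. Your proof is therefore a genuine addition, and it is correct.

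The skeleton is sound: identifying $\calH_n^d$ with the orthogonal complement of $q_n\calR_{d-2}$ under the contraction pairing (which is non-degenerate, since $\bfy^\alpha\circ\bfx^\beta=\alpha!\,\delta_{\alpha\beta}$) reduces the direct-sum claim to showing $q_n\calR_{d-2}\cap\calH_n^d=0$, and the iterated Leibniz--Euler identity does exactly this. Your bookkeeping worry about the coefficients is resolved cleanly: if one unwinds the induction, what you are really proving is that $\Lap(q_n^m h)=C_m\,q_n^{m-1}h+q_n^m\Lap h$ for $h\in\calR_{d-2m}$, where
\[
C_m=\sum_{j=1}^m\bigl(2n+4(d-2j)\bigr)=2m\bigl(n+2d-2(m+1)\bigr),
\]
and the constraint $d-2m\geq 0$ gives $2d-2m-2\geq 2m-2\geq 0$, so $C_m\geq 2mn>0$ for $m\geq 1$. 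Thus $\Lap(q_n^m h)=0$ forces $h\in q_n\calR_{d-2m-2}$ (or $h=0$ once $d-2m<2$), and the descent terminates. The step you call ``same shape $c_1g_1+q_n\Lap g_1=0$'' involves one extra cancellation of a factor $q_n$, which is legitimate by integrality of $\calR$; the same integrality underwrites your final pass turning $q_n\calR_{d-2}=q_n^2\calR_{d-4}\oplus q_n\calH_n^{d-2}$ into the full decomposition. Your closing remark that the positivity is ``automatic'' from the $\mfsl_2$-triple $(q_n,\Lap,\text{Euler})$ is the conceptual core of the textbook proof in \cite{GW98}; your elementary version makes that implicit positivity explicit, which is arguably more self-contained and better suited to the paper's otherwise quite computational style.
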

By \autoref{prop decomposizione armonici}, we can easily determine the dimension of each component of the vector space of harmonic polynomials. 
\begin{cor}
\label{cor dimens harm}
For every $d,n\in\bbN$,
\[
\dim\calH_n^d=\dim S^d\bbC^n-\dim S^{d-2}\bbC^n=\binom{d+n-1}{n-1}-\binom{d+n-3}{n-1}.
\]
\end{cor}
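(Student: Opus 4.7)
The plan is to deduce the corollary directly from Proposition \ref{prop decomposizione armonici}, using the fact that multiplication by $q_n$ defines an injective linear map on the appropriate graded component of the polynomial ring.

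First I would invoke the direct sum decomposition $\calR_d=q_n\calR_{d-2}\oplus\calH_n^d$ established in the previous proposition, which immediately gives
\[
\dim\calH_n^d=\dim\calR_d-\dim(q_n\calR_{d-2}).
\]
Next I would observe that the multiplication-by-$q_n$ map $\calR_{d-2}\to\calR_d$ is injective: indeed $\calR=\bbC[x_1,\dots,x_n]$ is an integral domain and $q_n\neq 0$, so no nonzero polynomial of degree $d-2$ can be annihilated by $q_n$. Consequently $\dim(q_n\calR_{d-2})=\dim\calR_{d-2}=\dim S^{d-2}\bbC^n$, which yields the first equality in the statement.

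For the second equality I would simply recall the standard formula $\dim S^k\bbC^n=\binom{k+n-1}{n-1}$ for the dimension of the space of homogeneous polynomials of degree $k$ in $n$ variables, and substitute it for $k=d$ and $k=d-2$. Since every step is either an immediate consequence of the cited proposition or a standard fact, I do not anticipate any obstacle; the only thing worth flagging is the degenerate case $d<2$, where $\calR_{d-2}=0$ by convention and the binomial $\binom{d+n-3}{n-1}$ vanishes, so the formula remains valid.
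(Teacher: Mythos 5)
Your argument is correct and is exactly the one the paper intends: the corollary is stated as an immediate consequence of Proposition~\ref{prop decomposizione armonici}, and filling this in amounts precisely to what you wrote, namely that multiplication by $q_n$ is injective on $\calR_{d-2}$ so the direct sum decomposition gives $\dim\calH_n^d=\dim\calR_d-\dim\calR_{d-2}$, and then one substitutes the standard binomial formula. Your remark on the degenerate case $d<2$ is a nice touch that the paper leaves implicit.
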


\section{The apolar ideal of \texorpdfstring{$q_n^s$}{qns}}
\label{section apolar ideal}
\noindent Now we will prove that the apolar ideal of $q_n^s$ is precisely the ideal generated by harmonic polynomials of degree $s+1$. To do this, we start by observing the behavior of the apolarity action of an arbitrary polynomial on $q_n^s$. By considering monomials of degree $1$ first, we obtain for every $j=1,\dots,n$
\[
y_j\circ q_n^s=\pdv{q_n^s}{x_j}=2sq_n^{s-1}x_j.
\]
Using Leibniz's rule, we can extend this to any polynomial $g\in \calD_k$, with $k\leq s$, and we get
\[
g\circ q_n^s=2^{k}\frac{s!}{(s-k)!}q_n^{s-k}g+q_n^{s-k+1}h,
\]
for a suitable $h\in \calR_{k-2}$ (here, as previously, we use the same notation $g$ for the polynomial obtained replacing $y_j$ by $x_j$ for every $j=1,\dots,n$).  

Now, considering the action of $G=\GL_n(\bbC)$ on both $\calR$ and $\calD$, we see that they naturally have a structure of $G$-modules.
 Next result is related to the catalecticant map and is crucial for our purposes. For any polynomial $h\in\calD$ we denote by $G_h$ the stabilizer of $h$, i.e., the subgroup of $G$ given by
\[
G_h=\set{A\in G|A.h=h}.
\]
\begin{prop}
\label{Prop polar map equivariant}
Let $h\in \calR_d$ and let $G=\GL_n(\bbC)$. Then the catalecticant map of $h$ is $G_h$-equivariant, that is
\[
\Cat_h(A.f)=A.\Cat_h(f)
\]
for every $A\in G_h$.
\end{prop}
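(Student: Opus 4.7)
The plan is to deduce the proposition from the stronger fact that the apolarity pairing $\circ\colon\calD\times\calR\to\calR$ is $G$-equivariant, i.e., $(A.\phi)\circ(A.f)=A.(\phi\circ f)$ for every $A\in G$, $\phi\in\calD$, $f\in\calR$. Granting this, the result reduces to a one-line calculation: if $A\in G_h$ then $A.h=h$, and
$$\Cat_h(A.f)=(A.f)\circ h=(A.f)\circ(A.h)=A.(f\circ h)=A.\Cat_h(f).$$

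To establish the general $G$-equivariance of $\circ$, I would first recall the two $G$-module structures in play: the natural action of $\GL_n(\bbC)$ on $\calR=S(V)$ extending the defining action on $V$, and the contragredient action on $\calD=S(V^*)$ extending $A.\phi=\phi\circ A^{-1}$ on $V^*$. Both actions are algebra automorphisms of the respective symmetric algebras, so they respect the symmetric products appearing in \eqref{polarization map}. The polarization map itself is assembled, by multilinearity and the sum over $k$-element subsets of $\{1,\dots,d\}$, out of the degree-one evaluation pairing $V^*\times V\to\bbC$, which is manifestly $G$-invariant because
$$(A.\phi)(A.v)=\phi(A^{-1}Av)=\phi(v).$$

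The verification on decomposable elements $\phi=\phi_1\cdots\phi_k$ and $f=v_1\cdots v_d$ is then a routine unwinding. Each summand in \eqref{polarization map} applied to $(A.\phi,A.f)$ equals $\prod_l\phi_l(v_{\sigma(l)})\prod_{j\notin\{i_1,\dots,i_k\}}A.v_j$, which matches $A.\bigl(\prod_l\phi_l(v_{\sigma(l)})\prod_{j\notin\{i_1,\dots,i_k\}}v_j\bigr)$ since $A$ acts trivially on the scalar factors and the range of the sum is preserved under the simultaneous substitution $v_i\mapsto A.v_i$, $\phi_j\mapsto A.\phi_j$. Summing over the selections and extending by bilinearity yields the claimed equivariance on all of $\calD\times\calR$.

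I do not anticipate any genuine obstacle here; the essential content is that the polarization map is defined in purely intrinsic terms out of the duality pairing and the symmetric product, both of which are $G$-equivariant by design. The only pitfall to watch for is the standard one of being consistent with the contragredient convention for the action on $\calD$, so that the evaluation pairing is $G$-invariant rather than merely $G$-covariant; once that is fixed, the proof is a formal consequence.
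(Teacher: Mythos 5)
Your proof is correct and takes the same route as the paper: both reduce the statement to the $G$-equivariance of the apolarity action, $(A.\phi)\circ(A.f)=A.(\phi\circ f)$, and then specialize to $A\in G_h$ so that $A.h=h$. The only divergence is that the paper cites \cite{IK99}*{Proposition A.3} for this equivariance, whereas you verify it directly from the definition in \eqref{polarization map}; your verification, though left as a sketch, is sound and correctly locates the crux in the $G$-invariance of the scalar factors $\phi_l(v_{\sigma(l)})$ under the contragredient action.
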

\begin{proof}
The apolarity action is $G$-equivariant (see \cite{IK99}*{Proposition A.3}), which means that for any $\phi\in \calD$ and $f\in\calR$, we have
\[
B.(\phi\circ f)=(B.\phi)\circ(B.f)
\]
for every $B\in \GL_n(\bbC)$. Thus, for any $A\in G_h$, we have
\[
\Cat_h(A.\phi)=(A.\phi)\circ h=(A.\phi)\circ (A.h)=A.(\phi\circ h)=A.\Cat_h(\phi)
\]
for every $\phi\in \calD$.
\end{proof}

Now, the next step is to determine the components of lower degree. The fact that the catalecticant matrices of $q_n^s$ are all full-rank has already been shown by B.~Reznick, using \cite{Rez92}*{Theorem 8.15} and referring to \cite{Rez92}*{Theorem 3.7} and \cite{Rez92}*{Theorem 3.16}. Another proof is provided by F.~Gesmundo and J.~M.~Landsberg in \cite{GL19}*{Theorem 2.2}. For the convenience of the reader we give another proof here, as the strategy we use introduces a way to determine the higher components of the apolar ideal of $q_n^s$.
\begin{prop}
\label{Prop Elementi di grado min s}
For every $m\leq s$,
\[
\big(q_n^s\big)^\perp_m=0.
\]
\end{prop}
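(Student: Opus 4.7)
The plan is to prove this by induction on $m$, showing that any $g \in \calD_m$ with $g \circ q_n^s = 0$ must vanish. I would dispatch the base cases $m=0$ and $m=1$ by direct computation: for $m=0$, $\alpha \circ q_n^s = \alpha q_n^s$ forces $\alpha = 0$; for $m=1$, $y_j \circ q_n^s = 2s\, x_j q_n^{s-1}$, and the linear independence of $\{x_j q_n^{s-1}\}_{j=1}^n$ in $\calR_{2s-1}$ settles the claim. (When $s=0$ only $m=0$ occurs.)

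For the inductive step at $2 \leq m \leq s$, I would begin from the identity
\[
g \circ q_n^s \;=\; 2^m\,\frac{s!}{(s-m)!}\,q_n^{s-m}\,g \;+\; q_n^{s-m+1}\,h
\]
established just before the proposition, valid for $g \in \calD_m$ (with $g$ and $h \in \calR_{m-2}$ identified with their images in $\calR$ via $y_j \leftrightarrow x_j$, as in \autoref{remark reversing roles}). Assuming $g \circ q_n^s = 0$ and canceling the nonzero factor $q_n^{s-m}$ in the integral domain $\calR$ yields $2^m \frac{s!}{(s-m)!}\,g = -q_n h$, so $g$ is divisible by $q_n$. Thus $g = q_n g'$ for some $g' \in \calD_{m-2}$, and I am reduced to showing $g' \circ q_n^s = 0$, since the inductive hypothesis then forces $g' = 0$ and hence $g = 0$.

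To verify $g' \circ q_n^s = 0$, I would use that the apolarity action satisfies $(\phi_1 \phi_2) \circ f = \phi_1 \circ (\phi_2 \circ f)$ (immediate from the differential-operator interpretation), so that
\[
0 \;=\; (q_n g') \circ q_n^s \;=\; q_n \circ (g' \circ q_n^s) \;=\; \Delta\bigl(g' \circ q_n^s\bigr),
\]
exhibiting $g' \circ q_n^s$ as a harmonic polynomial in $\calH_n^{2s-m+2}$. On the other hand, applying the displayed identity with $g' \in \calD_{m-2}$ in place of $g$ (permissible since $m-2 \leq s$) expresses $g' \circ q_n^s$ as $q_n^{s-m+2}\bigl(c_{m-2,s}\,g' + q_n h'\bigr)$, which lies in $q_n \calR_{2s-m}$ because $s-m+2 \geq 2 \geq 1$. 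By \autoref{prop decomposizione armonici}, the direct-sum decomposition $\calR_{2s-m+2} = q_n \calR_{2s-m} \oplus \calH_n^{2s-m+2}$ forces the harmonic part in $q_n \calR$ to be zero, so $g' \circ q_n^s = 0$ as required.

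The only real obstacle is bookkeeping: one must check that every exponent of $q_n$ that appears, in particular $s-m$ and $s-m+2$, remains nonnegative, and that the identity is applicable both to $g$ and to $g'$. The hypothesis $m \leq s$ secures both conditions at once, after which the result follows from the short induction above combined with the harmonic decomposition of \autoref{prop decomposizione armonici}.
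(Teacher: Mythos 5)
Your proof is correct, and it is genuinely different in organization from the one in the paper, even though both rest on the same two ingredients: the Leibniz-type identity $g\circ q_n^s = 2^m\frac{s!}{(s-m)!}q_n^{s-m}g + q_n^{s-m+1}h$ stated before the proposition, and the direct sum decomposition $\calR_d = q_n\calR_{d-2}\oplus\calH_n^d$ of \autoref{prop decomposizione armonici}. The paper expands the hypothetical annihilator $g$ in one shot as $g=\sum_j q_n^j h_j$ with each $h_j$ harmonic, pushes this through the catalecticant via the formula $\Lap\big(q_n^s\big)=2s(n+2(s-1))q_n^{s-1}$, factors out the least power of $q_n$ appearing, and reaches a contradiction from the fact that the leading harmonic component would have to be divisible by $q_n$. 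You instead induct on $m$ and peel off one factor of $q_n$ at a time: the Leibniz identity forces $q_n\mid g$, and then the clever observation that $(q_ng')\circ q_n^s = \Lap\big(g'\circ q_n^s\big)=0$ exhibits $g'\circ q_n^s$ as harmonic, while a second use of the identity puts it in $q_n\calR$, so the direct sum decomposition kills it and the induction closes. Your route avoids the bookkeeping with the index $t$ of the first nonzero harmonic component (which in the paper's write-up is a bit delicate, with a couple of typographical slips around $h_t$ and $h_k$), at the modest cost of a two-step induction needing both base cases $m=0$ and $m=1$; both cases are handled correctly, and the nonnegativity of the exponents $s-m$ and $s-m+2$ is indeed guaranteed by $m\le s$ exactly as you say.
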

\begin{proof}
Let $g\in\calD_m$ be a form of degree $m$ such that $g\neq 0$. By the decomposition in \autoref{prop decomposizione armonici}, we can write $g$ uniquely as
\[
g=\sum_{j=0}^{\left\lfloor\frac{m}{2}\right\rfloor}q_n^jh_j,
\]
where $h_j\in\calH_n^{m-2j}$ for every $j=1,\dots,\left\lfloor\frac{m}{2}\right\rfloor$. Considering the contraction action of $g$ on $q_n^s$, by formula \eqref{rel_Laplace_on_q_n^s}, we have:
\begin{equation*}
g\circ q_n^s=\sum_{j=0}^{\left\lfloor\frac{m}{2}\right\rfloor}\big(q_n^jh_j\big)\circ q_n^s=\sum_{j=0}^{\left\lfloor\frac{m}{2}\right\rfloor}h_j\circ \Delta^j\big(q_n^s\big)=h_0\circ q_n^s+2s\bigl(n+2(s-1)\bigr)\sum_{j=1}^{\left\lfloor\frac{m}{2}\right\rfloor}h_j\circ \Delta^{j-1}\big(q_n^{s-1}\big).
\end{equation*}
Thus, by iterating the process and defining the natural number $t$ to be the lowest value in $\pg*{1,\dots,\pint*{\frac{m}{2}}}$ such that $h_{t}\neq 0$, the quantity above can be written as
\[
g\circ q_n^s=\sum_{j=t}^{\pint*{\frac{m}{2}}}C_jh_j\circ q_n^{s-j}=\sum_{j=t}^{\pint*{\frac{m}{2}}}C_jq_n^{s-m+j}h_j+q_n^{s-m+j+1}p_j,
\]
where $p_j\in \calR_{m-2(j+1)}$ and $C_j$ is a constant value equal to
\[
C_j=2^j\frac{s!}{(s-j)!}\biggl(\prod_{l=1}^{j}\bigl(n+2(s-l)\bigr)\biggr)
\]
for every $j=1,\dots,\pint*{\frac{m}{2}}$.
So, we can write 
\begin{align*}
g\circ q_n^s&= q_n^{s-m+t}\Bigg(h_t+q_np_t+\sum_{j=t+1}^{\pint*{\frac{m}{2}}}C_jq_n^{j-t}h_j+q_n^{j-k+1}p_j\Bigg)\\
&=q_n^{s-m+t}\left(h_t+q_n\Bigg(p_t+\sum_{j=t+1}^{\pint*{\frac{m}{2}}}C_jq_n^{j-t-1}h_j+q_n^{j-t}p_j\Bigg)\right).
\end{align*}
Hence, if we had $g\circ q_n^s=0$, then we should have
\[
h_t=-q_n\Bigg(p_t+\sum_{j=t+1}^{\pint*{\frac{m}{2}}}C_jq_n^{j-t-1}h_j+q_n^{j-t}p_j\Bigg),
\]
but this means that $q_n\mid h_k$, that is clearly a contradiction since $h_k$ is harmonic. Therefore, we must have $g\circ q_n^s\neq 0$, and we have proved that in the apolar ideal $\big(q_n^s\big)^\perp$ there are no polynomials of degree lower than $s+1$.
\end{proof}
The stabilizer of the form $q_n^s$ has a quite simple structure and, in particular, it contains the special orthogonal group $\Oa_n(\bbC)$. 
\begin{lem}
\label{Lem stabilizer q^s}
For every $n,s\in\bbN$ the stabilizer of the form $q_n^s$ with respect to the action of $\GL_n(\bbC)$ is the group
\[
G_{q_n^s}=\bbZ_{s}\times \Oa_n(\bbC).
\]
\end{lem}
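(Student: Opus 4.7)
The strategy is to use unique factorization in $\calR$ to reduce the equation $A\cdot q_n^s=q_n^s$ to the much simpler linear equation $A\cdot q_n=\zeta q_n$ for some $\zeta\in\mu_s$, and then to translate this into a matrix condition that characterizes (scaled) orthogonal matrices.

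First I would note that since $\GL_n(\bbC)$ acts on $\calR$ by $\bbC$-algebra automorphisms, one has $A\cdot q_n^s=(A\cdot q_n)^s$, so $A\in G_{q_n^s}$ if and only if $(A\cdot q_n)^s=q_n^s$. Because $q_n$ is irreducible in the UFD $\calR=\bbC[x_1,\dots,x_n]$ (for $n\geq 2$; the case $n=1$ is a direct computation on a single variable), the equality $(A\cdot q_n)^s=q_n^s$ forces $A\cdot q_n=\zeta q_n$ for some $s$-th root of unity $\zeta\in\mu_s$. Using the convention $(A\cdot q_n)(x)=q_n(Ax)=x^{\mathsf{T}}A^{\mathsf{T}}Ax$, this equation turns into the matrix identity $A^{\mathsf{T}}A=\zeta I$.

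Next, I would extract the decomposition $A=\mu O$. Picking any square root $\mu\in\bbC^*$ of $\zeta$, one has $\mu^{2s}=\zeta^s=1$, so $\mu\in\mu_{2s}$, and the matrix $O:=\mu^{-1}A$ satisfies $O^{\mathsf{T}}O=\zeta^{-1}A^{\mathsf{T}}A=I$, hence $O\in\Oa_n(\bbC)$. Conversely any product $\mu O$ with $\mu\in\mu_{2s}$ and $O\in\Oa_n(\bbC)$ preserves $q_n^s$, since $(\mu O)\cdot q_n^s=\mu^{2s}q_n^s=q_n^s$. Thus the assignment $(\mu,O)\mapsto\mu O$ defines a surjective homomorphism of algebraic groups
\[
\mu_{2s}\times\Oa_n(\bbC)\twoheadrightarrow G_{q_n^s},
\]
whose kernel is exactly $\langle(-1,-I)\rangle\cong\bbZ_2$, since $\mu I=O\in\Oa_n(\bbC)$ forces $\mu=\pm 1$.

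The final step would be to recognize the central product $(\mu_{2s}\times\Oa_n(\bbC))/\langle(-1,-I)\rangle$ as the direct product $\bbZ_s\times\Oa_n(\bbC)$. When $s$ is odd, this is immediate: the cyclic group $\mu_{2s}$ splits as $\mu_s\times\mu_2$ and the diagonal $\mu_2$ gets absorbed into $\Oa_n(\bbC)$, leaving precisely $\mu_s\times\Oa_n(\bbC)$. I expect the main obstacle to be the parity consideration when $s$ is even, where $\mu_{2s}$ does not split off a copy of $\mu_2$; here one must exhibit a splitting by combining a primitive $2s$-th root scalar with a suitable element $O\in\Oa_n(\bbC)$ of order dividing $s$ (for instance, a block-diagonal rotation by angle $\pi/s$), producing a subgroup of $G_{q_n^s}$ of order $s$ that together with $\Oa_n(\bbC)$ generates $G_{q_n^s}$ and intersects it trivially.
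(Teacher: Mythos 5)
Your reduction to $A^{\mathsf{T}}A=\zeta I$ and the surjection $\mu_{2s}\times\Oa_n(\bbC)\twoheadrightarrow G_{q_n^s}$ with kernel $\langle(-1,-I)\rangle$ is correct, and in fact more careful than the paper's own argument: the paper passes from $\rme^{-2(j-1)\uppi\rmi/s}(A\cdot q_n)=q_n$ directly to $A=\rme^{2(j-1)\uppi\rmi/s}B$ with $B\in\Oa_n(\bbC)$, silently conflating scalar multiplication of the polynomial with the degree-two action of the scalar matrix, and thereby landing on an $s$-th root of unity where a $2s$-th root is what actually occurs. (A minor point: $q_2=(x_1+\rmi x_2)(x_1-\rmi x_2)$ is reducible over $\bbC$, so ``irreducible for $n\geq 2$'' is wrong as stated, though the UFD argument still forces $A\cdot q_n=\zeta q_n$ by matching factorizations in degree two.)

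The genuine gap is in your final step, and it cannot be repaired, because the lemma as written is false for even $s$. From $G_{q_n^s}\cong(\mu_{2s}\times\Oa_n(\bbC))/\langle(-1,-I)\rangle$ one sees that the scalar matrices in $G_{q_n^s}$ form the cyclic group $\mu_{2s}I\cong\bbZ_{2s}$, and for $n\geq 3$ this is the entire center; the center of $\bbZ_s\times\Oa_n(\bbC)$, by contrast, is $\bbZ_s\times\{\pm I\}$, which is non-cyclic when $s$ is even, so the two groups are not isomorphic. Your proposed fix fails in two ways. A direct factor $H$ of order $s$ would have to be normal and meet $\Oa_n(\bbC)$ trivially, hence centralize $\Oa_n(\bbC)$ and lie in the scalars $\mu_{2s}I$; but the unique order-$s$ subgroup there is $\mu_s I$, which contains $-I\in\Oa_n(\bbC)$ when $s$ is even. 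Even a non-normal complement (which would only give a semidirect product, not the claimed direct product) does not exist for odd $n$, in particular for $n=3$: a lift $\rho O$ of a generator of the quotient $\bbZ_s$, with $\rho$ a primitive $2s$-th root of unity, satisfies $(\rho O)^s=\rho^s O^s=-O^s$, and $O^s=-I$ is impossible since $\det(O)^s=(\pm1)^s=1$ for $s$ even while $\det(-I)=(-1)^n=-1$ for $n$ odd. The correct statement is $G_{q_n^s}=\mu_{2s}\cdot\Oa_n(\bbC)\cong(\mu_{2s}\times\Oa_n(\bbC))/\langle(-1,-I)\rangle$, which specializes to $\bbZ_s\times\Oa_n(\bbC)$ exactly when $s$ is odd. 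The paper is unharmed downstream, since in Proposition~\ref{Prop ker polar map} it only ever uses the trivial containment $\SO_n(\bbC)\subseteq G_{q_n^j}$.
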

\begin{proof}
Given any $A\in\GL_n(\bbC)$, we have
\[
A\cdot q_n^s=A\cdot\bigl(x_1^2+\cdots+x_n^2\bigr)^s=\bigl((A\cdot x_1)^2+\dots+(A\cdot x_n)^2\bigr)^s=(A\cdot q_n)^s
\]
and hence
\[
A\cdot q_n^s=(A\cdot q_n)^s=q_n^s,
\]
that is,
\[
\bigl((A\cdot x_1)^2+\dots+(A\cdot x_n)^2\bigr)^s=\bigl(x_1^2+\cdots+x_n^2\bigr)^s.
\]
This means, considering the $(2s)$-th roots of unity, that
\[
A\cdot q_n=\rme^{\frac{2(j-1)\uppi\rmi}{s}}q_n
\]
for some $j\in\bbN$ such that $1\leq j\leq s$ and this implies that
\[
\rme^{-\frac{2(j-1)\uppi\rmi}{s}}A\cdot q_n=q_n.
\]
In particular, since the stabilizer of the form $q_n$ corresponds to the orthogonal group $\Oa_n(\bbC)$, we have
\[
A=\rme^{\frac{2(j-1)\uppi\rmi}{s}}B
\]
for some $B\in\Oa_n(\bbC)$, which proves the statement.
\end{proof}
Before proving that the set of $(s+1)$-harmonic polynomials generates the apolar ideal, we need to prove another lemma. Let us define the linear forms $u,v\in R_1$ as
\[
u=y_1+\rmi y_2,\qquad v=y_1-\rmi y_2.
\]
\begin{lem}
\label{Rem polinomi armoni u v}
For every $d\in\bbN$, the polynomials $u^d,v^d\in\calD_d$ are harmonic. Moreover, if $d\geq s+1$, then $u^d,v^d\in\big(q_n^s\big)^\perp$.
\end{lem}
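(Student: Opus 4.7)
The statement splits into two parts, which I would handle in turn.

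For the harmonicity of $u^d$ and $v^d$, I would interpret the definition $\calH_n^d = \Ker(\Der_{q_n})$ on the dual side via \autoref{remark reversing roles}, so that a form in $\calD_d$ is harmonic precisely when it is annihilated by the Laplace operator $\sum_{j=1}^n \pdv[2]{}{y_j}$ on $\calD$. Since $u = y_1 + \rmi y_2$ depends only on $y_1$ and $y_2$, every derivative with respect to $y_3, \dots, y_n$ vanishes, and a direct computation gives
\[
\Lap(u^d) = d(d-1)u^{d-2} + d(d-1)\rmi^{2} u^{d-2} = d(d-1)(1+\rmi^{2})u^{d-2} = 0.
\]
The identical argument, with $\rmi$ replaced by $-\rmi$, proves $\Lap(v^d) = 0$.

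For the apolarity $u^d \in \bigl(q_n^s\bigr)^\perp$ when $d \geq s+1$, my plan is to realize $u$ as a multiple of a first-order differential operator in a well-chosen coordinate system. Set $\alpha = x_1+\rmi x_2$ and $\beta = x_1-\rmi x_2$, so that $q_n = \alpha\beta + x_3^{2} + \cdots + x_n^{2}$ and $\{\alpha,\beta,x_3,\dots,x_n\}$ is another linear basis of $\calR_1$. Evaluating the polar pairing on this basis yields $u\circ\alpha = 1+\rmi^{2} = 0$, $u\circ\beta = 1-\rmi^{2} = 2$, and $u\circ x_j = 0$ for $j\geq 3$. Since the apolarity action of any linear form in $\calD_1$ is a derivation on $\calR$ (immediate from the monomial formula recalled in \autoref{section apolarity}), the action of $u$ on polynomials expressed in the coordinates $\alpha,\beta,x_3,\dots,x_n$ coincides with $2\pdv{}{\beta}$. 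Iterating $d$ times and expanding $q_n^s$ by the binomial theorem, the highest power of $\beta$ occurring is $\beta^s$, so applying $\pdv[d]{}{\beta}$ with $d \geq s+1$ annihilates $q_n^s$. The symmetric substitution $\alpha \leftrightarrow \beta$ yields the corresponding conclusion for $v^d$, which then acts on these coordinates as $2\pdv{}{\alpha}$.

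The main obstacle I anticipate is merely the bookkeeping needed to pass between the dual variables $y_j$ and the differential-operator picture, in particular verifying that $u$ corresponds to differentiation in the $\beta$-direction rather than the $\alpha$-direction. Once this identification is set up carefully, both claims reduce to elementary calculations with no deeper structural content.
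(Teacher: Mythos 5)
Your proof is correct. The harmonicity computation is the same as the paper's. For the apolarity part, both you and the paper hinge on the single key fact that $u\circ(x_1+\rmi x_2)=0$, but the organization differs: the paper applies Leibniz's rule iteratively, first computing $u\circ q_n^s = 2s\,(x_1+\rmi x_2)\,q_n^{s-1}$ and then repeating, noting that each application of $u$ only hits the $q_n$-factor until, after $s+1$ applications, the surviving power of $(x_1+\rmi x_2)$ is annihilated. You instead reorganize the same information into a change of basis on $\calR_1$: with $\alpha=x_1+\rmi x_2$, $\beta=x_1-\rmi x_2$, $x_3,\dots,x_n$, the form $u$ acts as $2\,\partial/\partial\beta$ (since $u\circ\alpha=0$, $u\circ\beta=2$, $u\circ x_j=0$ for $j\ge 3$, and a degree-one element of $\calD$ acts as a derivation), and $q_n=\alpha\beta+x_3^2+\cdots+x_n^2$ has $\beta$-degree one, so $q_n^s$ has $\beta$-degree $s$ and is killed by $\partial_\beta^d$ for $d\ge s+1$. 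Your coordinate-adapted version is a bit cleaner and comes for free with the exact formula $u^k\circ q_n^s = 2^k\,\tfrac{s!}{(s-k)!}\,(x_1+\rmi x_2)^k q_n^{s-k}$ for $k\le s$, which the paper computes only implicitly; the paper's version is slightly more self-contained since it never leaves the original coordinates.
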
 
\begin{proof}
We prove the statement only for the linear polynomial $u$, since the case of $v$ is analogous. Let us suppose $d\geq 2$. Then we have
\[
\Delta u^d=\pdv[2]{}{y_1}(y_1+\rmi y_2)^d+\pdv[2]{}{y_2}(y_1+\rmi y_2)^d=d(d-1)(y_1+\rmi y_2)^{d-2}\big(1+\rmi^2\big)=0.
\]
We now observe that
\[
(y_1+\rmi y_2)\circ(x_1+\rmi x_2)=1+\rmi^2=0,
\]
and
\[
u\circ q_n^s=(y_1+\rmi y_2)\circ\big(x_1^2+\dots+x_n^2\big)^s=2s(x_1+\rmi x_2)\big(x_1^2+\dots+x_n^2\big)^{s-1}.
\]
Then, if $d\geq s+1$, we obtain by Leibniz's rule
\[
u^d\circ q_n^s=(y_1+\rmi y_2)^{s+1}\circ\big(x_1^2+\dots+x_n^2\big)^s=2^ss!(y_1+\rmi y_2)^{d-s}\circ(x_1+\rmi x_2)^s=0,
\]
and hence $u^d\in\big(q_n^s\big)^\perp$.
\end{proof}
\begin{rem}
\label{rem prod powers u v}
It is clear that the product of two powers of the polynomials $u$ and $v$ cannot be harmonic. Indeed, given any $l,m\in\bbN$ such that $l,m\geq 1$, we have
\[
u^lv^m=\big(y_1^2+y_2^2\big)(y_1+\rmi y_2)^{l-1}(y_1-\rmi y_2)^{m-1},
\]
which means that $\big(y_1^2+y_2^2\big)\mathrel{\bigm|} u^lv^m$.
\end{rem}
We can finally analyze each component of the catalecticant map to determine its kernel and reveal the structure of the apolar ideal of $q_n^s$.
\begin{prop}
\label{Prop ker polar map}
For every $k\in\bbN$ such that $1\leq k\leq s$, let 
\[
\begin{tikzcd}[row sep=0pt,column sep=1pc]
 \Cat_{q_n^s}^{s+k}\colon\calD_{s+k}\arrow{r} & \calR_{s-k} \\
  {\hphantom{\Cat_{q_n^s}^{s+k}\colon{}}} \phi \arrow[mapsto]{r} & \phi\circ q_n^s
\end{tikzcd}
\]
be the component of degree $s+k$ of the catalecticant map of $q_n^s$. Then,
\begin{equation}
\label{relation prop 3.6}
\big(q_n^s\big)_{s+k}^\perp=\Ker\pt*{\Cat_{q_n^s}^{s+k}}=\bigoplus_{j=0}^{k-1}q_n^j\calH_n^{s+k-2j}.
\end{equation}
\end{prop}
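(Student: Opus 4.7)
The plan is to analyze the catalecticant map $\Cat_{q_n^s}^{s+k}$ by exploiting its $\SO_n(\bbC)$-equivariance, which follows from \autoref{Prop polar map equivariant} together with the containment $\SO_n(\bbC) \subseteq G_{q_n^s}$ of \autoref{Lem stabilizer q^s}. By \autoref{prop decomposizione armonici}, both source and target split as
\[
\calD_{s+k} = \bigoplus_{j=0}^{\lfloor(s+k)/2\rfloor} q_n^j \calH_n^{s+k-2j}, \qquad \calR_{s-k} = \bigoplus_{i=0}^{\lfloor(s-k)/2\rfloor} q_n^i \calH_n^{s-k-2i},
\]
into $\SO_n(\bbC)$-submodules that are irreducible (for $n \ge 3$) or, when $n=2$, into one-dimensional weight spaces of weights $\pm d$. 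Writing any $g \in \calD_{s+k}$ as $g = \sum_j q_n^j h_j$ with $h_j \in \calH_n^{s+k-2j}$, and using the iterated Laplace formula $\Delta^j(q_n^s) = C_j\, q_n^{s-j}$ coming from \eqref{rel_Laplace_on_q_n^s} (with $C_j \ne 0$ since $j \le \lfloor(s+k)/2\rfloor \le s$), one obtains
\[
g \circ q_n^s = \sum_{j} C_j\, h_j \circ q_n^{s-j},
\]
so it suffices to describe $h_j \circ q_n^{s-j}$ on each harmonic summand.

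The restriction of $\Cat_{q_n^s}^{s+k}$ to $q_n^j \calH_n^{s+k-2j}$ is $\SO_n(\bbC)$-equivariant with image contained in the isotypic component of $\calR_{s-k}$ of type $\calH_n^{s+k-2j}$. Reading off the decomposition of $\calR_{s-k}$, such a component exists exactly when $s+k-2j = s-k-2i$ for some $i \ge 0$, i.e., when $j \ge k$, in which case the unique copy is $q_n^{j-k}\calH_n^{s+k-2j}$. Schur's lemma therefore forces the following dichotomy: if $j < k$, the whole summand $q_n^j\calH_n^{s+k-2j}$ lies in $\ker \Cat_{q_n^s}^{s+k}$; if $j \ge k$, the restriction is a scalar multiple of the natural identification $q_n^j \calH_n^{s+k-2j} \to q_n^{j-k}\calH_n^{s+k-2j}$.

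To verify that this scalar is nonzero for $j \ge k$, I would test on the single harmonic polynomial $h_j = u^{s+k-2j}$ with $u = y_1 + \rmi y_2$, harmonic by \autoref{Rem polinomi armoni u v}. Iterating the elementary identities $u \circ q_n^m = 2m(x_1+\rmi x_2)q_n^{m-1}$ and $u \circ (x_1+\rmi x_2) = 0$ yields the closed form $u^l \circ q_n^m = 2^l \frac{m!}{(m-l)!}(x_1+\rmi x_2)^l q_n^{m-l}$ for $l \le m$; applied with $l = s+k-2j$ and $m = s-j$ (valid because $j \ge k$ forces $l \le m$) this produces a nonzero element of $q_n^{j-k}\calH_n^{s+k-2j}$. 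Since the images for distinct $j \ge k$ sit in linearly independent summands of $\calR_{s-k}$, vanishing of $g \circ q_n^s$ forces each $h_j$ with $j \ge k$ to vanish, identifying the kernel as $\bigoplus_{j=0}^{k-1} q_n^j \calH_n^{s+k-2j}$.

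The main obstacle is the $n=2$ situation, where the Schur-type argument must be replaced by matching of $\SO_2(\bbC)$-weights: one tests additionally on $v^d = (y_1 - \rmi y_2)^d$ to cover the second weight of $\calH_2^d$, and the rest of the argument is identical. The remaining verification is routine bookkeeping on the constants in the Leibniz computation for $u^{s+k-2j}\circ q_n^{s-j}$ and on the index inequalities, which all follow from $k \le s$.
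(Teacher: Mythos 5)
Your proof is correct and follows essentially the same strategy as the paper: decompose $\calD_{s+k}$ into $\SO_n(\bbC)$-isotypic pieces $q_n^j\calH_n^{s+k-2j}$, use the equivariance of the catalecticant map and Schur's lemma to kill the summands with $j<k$, and test on $u^{s+k-2j}$ to pin down the nonzero scalars. The one genuine (minor) divergence is in how the nonvanishing half is handled: you verify injectivity summand-by-summand for every $j\ge k$ by computing $u^{s+k-2j}\circ q_n^{s-j}$ explicitly, whereas the paper groups all those summands into $q_n^k\calD_{s-k}$ at once and reduces the claim $(q_n^k h)\circ q_n^s = C_k\, h\circ q_n^{s-k}\neq 0$ directly to the already-proved injectivity statement $(q_n^{s-k})^\perp_{s-k}=0$ (\autoref{Prop Elementi di grado min s}). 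The paper's route is shorter because it reuses an earlier lemma; yours is self-contained and makes the Schur-lemma picture on both sides of the map fully explicit. Your caveat about $n=2$ (replacing irreducibility by a weight-space argument, testing on both $u^d$ and $v^d$) is a legitimate refinement that the paper does not spell out.
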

\begin{proof}
If $1\leq k\leq s$, then by \autoref{prop decomposizione armonici} we have
\[
\calD_{s+k}=\bigg(\bigoplus_{j=0}^{k-1}q_n^j\calH_n^{s+k-2j}\bigg)\oplus q_n^{k}\calH_n^{s-k}.
\]
To prove the statement it is sufficient to verify that
\[
q_n^{k}\calD_{s-k}\cap\Ker\Bigl(\Cat_{q_n^s}^{s+k}\Bigr)=\{0\}
\]
and
\[
\bigoplus_{j=0}^{k-1}q_n^j\calH_n^{s+k-2j}\subseteq\Ker\pt*{\Cat_{q_n^s}^{s+k}}.
\]
For the first assertion, let us consider a polynomial $h\in \calD_{s-k}$. Then, as we saw in the proof of \autoref{Prop Elementi di grado min s}, we have
\[
\big(q_n^{k}h\big)\circ q_n^s=h\circ\big(\Delta^{k} q_n^s\big)=C_kh\circ q_n^{s-k},
\]
where
\[
C_k=2^k\frac{s!}{(s-k)!}\biggl(\prod_{l=1}^{k}\bigl(n+2(s-l)\bigr)\biggr).
\]
In particular, by \autoref{Prop Elementi di grado min s}, we have $h\circ q_n^{s-k}=0$ if and only if 
\[
h\in\big(q_n^{s-k}\big)_{s-k}^\perp,
\]
which implies $h=0$.

Before proving the second assertion, we recall that
$\calH_n^d$ is an irreducible $\SO_n(\bbC)$-module for every $d\in\bbN$. Moreover, for every $j\in\bbN$ such that $j\leq k-1$, since by \autoref{Lem stabilizer q^s} we have 
\[
\SO_n(\bbC)\subseteq G_{q_n^j},
\]
also the space 
$q_n^j\calH_n^{s+k-2j}$
is an irreducible $\SO_n(\bbC)$-module.
Now, by \autoref{Prop polar map equivariant}, the catalecticant map 
\[
\Cat_{q_n^s}^{s+k}\colon\calD_{s+k}\to\calR_{s-k}
\]
is $\SO_n(\bbC)$-equivariant. Hence, by Schur's Lemma (see e.g.~\cite{FH91}*{Lemma 1.7}), it follows that the restriction 
\[
\Cat^{s+k}_{q_n^s}\big|_{q_n^j\calH_n^{s+k-2j}}\colon q_n^j\calH_n^{s+k-2j}\to\calR_{s-k}
\] 
must be the zero function or an isomorphism on the image. So, if we take the form 
\[
u=y_1+\rmi y_2\in \calD_1,
\] 
then, using \autoref{Rem polinomi armoni u v}, as
\[
q_n^ju^{s+k-2j}\in q_n^j\calH_n^{s+k-2j},
\] 
we obtain the equality
\[
\bigl(q_n^ju^{s+k-2j}\bigr)\circ q_n^s=C_ju^{s+k-2j}\circ q_n^{s-j}=u^{k-j+1}\circ\bigl(u^{s-j+1}\circ q_n^{s-j}\bigr)=0,
\]
which implies that 
\[
\Cat^{s+k}_{q_n^s}\big|_{q_n^j\calH_n^{s+k-2j}}\equiv 0.
\] 
Therefore, the inclusion is proved.
\end{proof}

We finally have all the necessary elements to describe the apolar ideal $\big(q_n^s\big)^\perp$, which is generated by harmonic polynomials of degree $s+1$.
\begin{teo}
\label{Teo Apolar ideal}
The apolar ideal of the form $q_n^s$ is given by
\[
\big(q_n^s\big)^\perp=\bigl(\calH_n^{s+1}\bigr).
\]
\end{teo}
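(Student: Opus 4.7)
The plan is to prove the two inclusions in the claimed equality \(\bigl(q_n^s\bigr)^{\perp} = \bigl(\calH_n^{s+1}\bigr)\) separately. The easy direction \(\bigl(\calH_n^{s+1}\bigr) \subseteq \bigl(q_n^s\bigr)^{\perp}\) is immediate: \autoref{Prop ker polar map} with \(k=1\) gives \(\calH_n^{s+1} = \bigl(q_n^s\bigr)^{\perp}_{s+1}\), so every generator already lies in the apolar ideal.

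For the reverse inclusion, write \(J = \bigl(\calH_n^{s+1}\bigr)\) and compare the two ideals in each graded piece. In degrees \(m \leq s\) both \(J_m\) and \(\bigl(q_n^s\bigr)^{\perp}_m\) vanish, by \autoref{Prop Elementi di grado min s} and because \(J\) is generated in degree \(s+1\), respectively. For \(m = s+k\) with \(k \geq 1\) I would proceed by induction on \(k\): the base case \(k=1\) is \autoref{Prop ker polar map}, and since \(J\) is generated in a single degree one has \(J_{s+k} = \calD_1 \cdot J_{s+k-1}\) for every \(k \geq 2\).

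The technical heart of the argument is the auxiliary claim that, for every integer \(d \geq 1\),
\[
\calH_n^{d+1} \oplus q_n\calH_n^{d-1} \subseteq \calD_1 \cdot \calH_n^d.
\]
Set \(u = y_1 + \rmi y_2\) and \(v = y_1 - \rmi y_2\). Since \(u^d \in \calH_n^d\) by \autoref{Rem polinomi armoni u v}, one obtains \(u^{d+1} = u \cdot u^d\) as a nonzero element of \(\calH_n^{d+1} \cap (\calD_1 \cdot \calH_n^d)\); since \(\calD_1 \cdot \calH_n^d\) is an \(\SO_n(\bbC)\)-submodule of \(\calD_{d+1}\) and \(\calH_n^{d+1}\) is irreducible as an \(\SO_n(\bbC)\)-module, Schur's lemma forces \(\calH_n^{d+1} \subseteq \calD_1 \cdot \calH_n^d\). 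For the second summand the key observation is that \(\Delta(y_i u^{d-1}) = 2\partial_{y_i} u^{d-1} = 0\) for every \(i \geq 3\), because \(u\) does not involve \(y_i\); hence each such \(y_i u^{d-1}\) is harmonic of degree \(d\). The explicit identity
\[
q_n u^{d-1} = (y_1^2 + y_2^2)\,u^{d-1} + \sum_{i=3}^n y_i^2 u^{d-1} = v \cdot u^d + \sum_{i=3}^n y_i\cdot (y_i u^{d-1})
\]
then realizes \(q_n u^{d-1}\) as an element of \(\calD_1 \cdot \calH_n^d\), and another application of Schur's lemma upgrades this to \(q_n\calH_n^{d-1} \subseteq \calD_1 \cdot \calH_n^d\).

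Granted this claim, the induction proceeds mechanically. Assuming \(J_{s+k-1} \supseteq \bigoplus_{j=0}^{k-2} q_n^j \calH_n^{s+k-1-2j}\), multiplying by \(\calD_1\) and applying the claim to each summand yields
\[
J_{s+k} \supseteq \bigoplus_{j=0}^{k-2} q_n^j \bigl(\calH_n^{s+k-2j} \oplus q_n \calH_n^{s+k-2-2j}\bigr) = \bigoplus_{j=0}^{k-1} q_n^j \calH_n^{s+k-2j},
\]
which equals \(\bigl(q_n^s\bigr)^{\perp}_{s+k}\) for \(1 \leq k \leq s\) by \autoref{Prop ker polar map}. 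For \(k > s\) the catalecticant has trivial target and \(\bigl(q_n^s\bigr)^{\perp}_{s+k} = \calD_{s+k}\); the same recursion then picks up the remaining component \(q_n^s\calH_n^{s-k+2}\) exactly once (at \(k = s+1\)) and trivially fills in the rest. The main obstacle is the auxiliary inclusion, and more precisely the containment \(q_n \calH_n^{d-1} \subseteq \calD_1 \cdot \calH_n^d\), whose proof hinges on the specific harmonic witness \(u^d\) and on the fact that \(u\) involves only two of the coordinate variables, so that the remaining terms in the expansion of \(q_n u^{d-1}\) automatically factor through harmonic elements of degree \(d\).
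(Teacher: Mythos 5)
Your proof is correct but organized differently from the paper's. The paper proves the reverse inclusion by a two-step induction on $k$ (treating odd and even $k$ separately), where the passage to the next case uses the pair of monomial witnesses $u^{s+2}=u^{s+1}\cdot u$ and $u^{s+1}v$; that the harmonic decomposition of $u^{s+1}v$ has a nonzero component in $q_n\calH_n^s$ is deduced from \autoref{rem prod powers u v}. You instead run a one-step induction with step $k\to k+1$, driven by the self-contained auxiliary claim $\calH_n^{d+1}\oplus q_n\calH_n^{d-1}\subseteq\calD_1\cdot\calH_n^d$. Your handling of the harder summand $q_n\calH_n^{d-1}$ is genuinely different and more explicit: rather than finding a witness inside $\calH_n^{s+1}\calD_1$ whose $q_n\calH_n^s$-component is nonzero, you exhibit a nonzero element of $q_n\calH_n^{d-1}\cap\calD_1\calH_n^d$ directly through the identity $q_nu^{d-1}=v\cdot u^d+\sum_{i\geq 3}y_i\cdot(y_iu^{d-1})$, whose summands are visibly products of a linear form with a harmonic form of degree $d$ (using that $y_iu^{d-1}$ is harmonic for $i\geq 3$ because $u$ involves only $y_1,y_2$). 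Both arguments close the same way: irreducibility of the relevant $\SO_n(\bbC)$-modules promotes a single witness in the intersection to a full inclusion. Your route avoids the parity split and does not need \autoref{rem prod powers u v} at all, which is a modest but real simplification.
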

\begin{proof}
By \autoref{Prop Elementi di grado min s}, we have observed that $\big(q_n^s\big)^\perp_k=0$ for every $1\leq k\leq s$. Therefore, to prove the statement, we simply need to show that
\[
\big(q_n^s\big)_d^\perp=\calH_n^{s+1}\calD_{d-s-1}
\]
for every $d\geq s+1$. Now, let $d=s+k$ for some $k\in\bbN$. Then, by \autoref{Prop ker polar map}, if $1\leq k\leq s+1$, this is equivalent to proving that
\begin{equation}
\label{relation HD}
\bigoplus_{j=0}^{k-1}q_n^j\calH_n^{s+k-2j}=\calH_n^{s+1}\calD_{k-1}.
\end{equation}
Gain by \autoref{Prop ker polar map}, we also have 
\[
\big(q_n^s\big)_{s+1}^\perp=\calH_n^{s+1},
\]
and hence we immediately obtain the inclusion
\begin{equation}
\label{relation first inclusion}
\bigoplus_{j=0}^{k-1}q_n^j\calH_n^{s+k-2j}\supseteq\calH_n^{s+1}\calD_{k-1}.
\end{equation}
The reverse inclusion can be obtained by induction on $k$, separately for odd and even values. If $k=1$, the equality is obvious and we have already seen it. If $k=2$, we have to show that
\[
\calH_n^{s+2}\oplus q_n\calH_n^s\subseteq\calH_n^{s+1}\calD_1.
\]
We consider the polynomials 
\[
u=y_1+\rmi y_2,\qquad v=y_1-\rmi y_2,
\]
and, as shown in \autoref{Rem polinomi armoni u v}, we can write the polynomial $u^{s+2}\in\calH_n^{s+2}$ as 
\[
u^{s+1}u\in\calH_n^{s+1}\calD_1,
\]
thus proving, since $\calH_n^{s+2}$ is an irreducible $\SO_n(\bbC)$-module, that
\[
\calH_n^{s+2}\subseteq \calH_n^{s+1}\calD_1.
\]
Now, by \autoref{rem prod powers u v}, we also have that $u^{s+1}v\notin\calH_n^{s+2}$, and in particular, using inclusion \eqref{relation first inclusion}, we have
\[
u^{s+1}v\in\calH_n^{s+1}\calD_1\subseteq\calH_n^{s+2}\oplus q_n\calH_n^s.
\] 
This means that there exist unique forms $h_1\in\calH_n^{s+2}$ and $ h_2\in\calH_n^{s}$, with $h_2\neq 0$ such that
\[
u^{s+1}v=h_1+q_nh_2\in \calH_n^{s+2}\oplus q_n\calH_n^s
\] 
and in particular,
\[
q_nh_2=u^{s+1}v-h_1\in\calH_n^{s+1}\calD_1.
\]
Hence, since $q_n\calH_n^s$ is an irreducible $\SO_n(\bbC)$-module, we have
\[
q_n\calH_n^s\subseteq\calH_n^{s+1}\calD_1,
\]
providing the required inclusion. Now, for $3\leq k\leq s$, assuming the equality is true for $k-2$, we need to show that  
\[
\bigoplus_{j=0}^{k-1}q_n^j\calH_n^{s+k-2j}=\calH_n^{s+k}\oplus q_n\pt*{\bigoplus_{j=0}^{k-2}q_n^{j-1}\calH_n^{s+k-2-2j}}\subseteq\calH_n^{s+1}\calD_{k-1}.
\]
As previously mentioned, we can consider the polynomial 
$u^{s+k}\in\calH_{s+k}$, which can be written as
\[
u^{s+1}u^{k-1}\in\calH_n^{s+1}\calD_{k-1},
\]
and conclude, once again by irreducibility, that
\[
\calH_n^{s+k}\subseteq\calH_n^{s+1}\calD_{k-1}.
\] 
Therefore, since by inductive hypothesis
\[
\bigoplus_{j=0}^{k-2}q_n^{j-1}\calH_n^{s+k-2-2j}\subseteq\calH_n^{s+1}\calD_{k-3},
\]
we have, as $q_n\in \calD_2$,
\[
q_n\bigg(\bigoplus_{j=0}^{k-2}q_n^{j-1}\calH_n^{s+k-2-2j}\bigg)\subseteq\calH_n^{s+1}\calD_{k-1},
\]
which implies the inclusion
\[
\bigoplus_{j=0}^{k-1}q_n^j\calH_n^{s+k-2j}\subseteq\calH_n^{s+1}\calD_{k-1},
\]
thus proving equality \eqref{relation HD}.
It remains to show that for every $d\geq 2(s+1)$,
\begin{equation}
\label{relation casi alti}
\big(q_n^s\big)^\perp_{d}=\calH_n^{s+1}\calD_{d-s-1}
\end{equation}
and, since by definition we have
\[
\big(q_n^s\big)^\perp_{d}=\calD_{d}
\]
for every $d\geq 2(s+1)$, we simply have to prove that
\[
\calD_{2s+m+1}=\calH_n^{s+1}\calD_{s+m}
\]
for every $m\geq 1$. We have just seen, by formulas \eqref{relation prop 3.6} and \eqref{relation HD}, that 
\[
\big(q_n^s\big)^\perp_{2s+1}=\calH_n^{s+1}\calD_s=\bigoplus_{j=0}^{s}q_n^j\calH_n^{2s-2j+1}.
\]
This implies, by decomposition of \autoref{prop decomposizione armonici}, that
\[
\calH_n^{s+1}\calD_s=\calD_{2s+1},
\]
from which we easily get
\[
\calD_{2s+m+1}=\calD_{2s+1}\calD_{m}=\calH_n^{s+1}\calD_{s}\calD_{m}=\calH_n^{s+1}\calD_{s+m}
\]
for every $m\geq 1$, that corresponds to equality \eqref{relation casi alti}.
\end{proof}

By \autoref{Teo Apolar ideal}, it is quite easy to obtain a lower bound for the rank of the form $q_n^s$. Indeed, as a direct consequence of \autoref{prop lower bound}, we have the following corollary.
\begin{cor}
\label{cor cat lower bound}
For every $n$,$s\in\bbN$
\[
\rk\pt*{q_n^s}\geq\brk\pt*{q_n^s}\geq\binom{s+n-1}{n-1}.
\]
\end{cor}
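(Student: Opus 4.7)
The plan is to deduce this directly from \autoref{prop lower bound} by specializing to the middle catalecticant $\Cat_{q_n^s}^{s}\colon\calD_s\to\calR_s$, whose domain and codomain both have dimension $\binom{s+n-1}{n-1}$. Since the catalecticant rank bounds the border rank from below, it suffices to show that this particular catalecticant has full rank, i.e.\ is an isomorphism.

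First I would invoke \autoref{Prop Elementi di grado min s} (equivalently, the degree-$\leq s$ part of \autoref{Teo Apolar ideal}): $(q_n^s)^{\perp}_m=0$ for every $m\leq s$. Applied at $m=s$, this says precisely that
\[
\Ker\bigl(\Cat_{q_n^s}^{s}\bigr)=\bigl(q_n^s\bigr)_s^{\perp}=0,
\]
so $\Cat_{q_n^s}^{s}$ is injective. Because $\dim\calD_s=\dim\calR_s=\binom{s+n-1}{n-1}$, injectivity forces $\Cat_{q_n^s}^{s}$ to be an isomorphism and in particular
\[
\rk\bigl(\Cat_{q_n^s}^{s}\bigr)=\binom{s+n-1}{n-1}.
\]

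Finally, applying \autoref{prop lower bound} with $j=s$ chains the inequalities
\[
\rk\bigl(q_n^s\bigr)\geq\brk\bigl(q_n^s\bigr)\geq\rk\bigl(\Cat_{q_n^s}^{s}\bigr)=\binom{s+n-1}{n-1},
\]
which is exactly the claim. There is no real obstacle here: all the substantive content is carried by \autoref{Prop Elementi di grado min s}, and the corollary only repackages that vanishing as a rank statement for the middle catalecticant and feeds it into the standard lower bound.
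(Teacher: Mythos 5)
Your argument is correct and is essentially the paper's own: the paper states the corollary as an immediate consequence of \autoref{prop lower bound} together with \autoref{Teo Apolar ideal}, and the only ingredient of the latter actually used is exactly the vanishing $(q_n^s)^{\perp}_s=0$, which is \autoref{Prop Elementi di grado min s}. You have simply unwound the citation to the underlying lemma and spelled out the dimension count showing $\Cat_{q_n^s}^{s}$ is an isomorphism; nothing is missing or different in substance.
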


\section{Border rank in three variables}
\label{section border rank}
\noindent The irreducibility of the space of harmonic polynomials as $\SO_n(\bbC)$-modules allows us to determine a canonical basis for the case of three variables in each component. Considering the Lie algebra of the Lie group $\SL_2(\bbC)$, given by the set
\[
\mfsl_2\bbC=\Set{A\in\Mat_2(\bbC)|\tr A=0},
\]
we know that its only irreducible representations correspond to the spaces of even symmetric powers of $\bbC^2$ (see \cite{FH91}*{Section 11.1} for details). That is,
\[
S^{2d}\bbC^2\simeq\bbC[x,y]_{2d}
\]
for each $d\in\bbN$.
In particular, we can consider a natural basis $\calA_2=\{H,E,F\}$ of $\mfsl_2\bbC$, where
\[
H=\begin{pmatrix}
1&0\\
0&-1
\end{pmatrix},\quad 
E=\begin{pmatrix}
0&1\\
0&0
\end{pmatrix},\quad 
F=\begin{pmatrix}
0&0\\
1&0
\end{pmatrix},
\]
satisfying the relations
\begin{equation}
\label{relation EFH}
[H,E]=2E,\quad [H,F]=-2F,\quad [E,F]=H.
\end{equation}
This basis allows the set of monomials
\[
\calB_{d}={\big\{x^{2d-j}y^{j}\big\}}_{j=0,\dots,2d}
\]
to represent a basis of eigenvectors of the action of $H$ on $S^{2d}\bbC^2$, which is therefore diagonalizable.
Moreover, we have that
\[
E\bigl(x^jy^{2d-j}\bigr)\in\bigl\langle x^{j+1}y^{2d-j-1}\bigr\rangle,\qquad F\bigl(x^{2d-j}y^j\bigr)\in\bigl\langle x^{2d-j-1}y^{j+1}\bigr\rangle,
\]
for every $j=0,\dots,2d-1$.

Now, the reason why we have introduced the Lie algebra $\mfsl_2\bbC$ is connected to a well-known consideration regarding representations of $\SO_3(\bbC)$ (see \cite{FH91}*{Section 18.2} for further details). Let us consider the Lie algebra $\mfso_3\bbC$ of the special orthogonal group $\SO_3(\bbC)$, defined as
\[
\mfso_3\bbC=\Set{A\in\Mat_3(\bbC)|A=-\transpose{A}}.
\]
Since $\calH_3^d$ is an irreducible $\SO_3(\bbC)$-module, then it is also an irreducible representation of the Lie algebra $\mfso_3\bbC$ (see e.g.~\cite{FH91}*{Exercise 8.17}).
So, if we consider the basis $\calA_3=\{H,E,F\}$ of $\mfso_3\bbC$, where
\[
H=\begin{pmatrix}
0 & -2\rmi & 0 \\
2\rmi & 0 & 0 \\
0 & 0 & 0  
\end{pmatrix},\quad
E=\begin{pmatrix}
0 & 0 & -1 \\
0 & 0 & -\rmi \\
1 & \rmi & 0  
\end{pmatrix},\quad
F=\begin{pmatrix}
0 & 0 & 1 \\
0 & 0 & -\rmi \\
-1 & \rmi & 0  
\end{pmatrix},
\]
we observe that relations \eqref{relation EFH} are satisfied. Thus, it means not only that
\[
\mfso_3\bbC\simeq\mfsl_2\bbC,
\]
but also, by the uniqueness of irreducible representations of $\mfsl_2\bbC$, that
\[
\calH_3^d\simeq S^{2d}\bbC^2.
\]
Furthermore, by using the linear change of variables in the space $\calD=\bbC[y_1,y_2,y_3]$, defined by
\[
u=\frac{y_1+\rmi y_2}{2},\quad v=\frac{y_1-\rmi y_2}{2},\quad z=y_3,
\]
we can express the elements of $\calA_3$ as 
\[
H=\begin{pmatrix}
2 & 0 & 0 \\
0 & -2 & 0 \\
0 & 0 & 0  
\end{pmatrix},\quad
E=\begin{pmatrix}
0 & 0 & -2 \\
0 & 0 & 0 \\
0 & 1 & 0  
\end{pmatrix},\quad
F=\begin{pmatrix}
0 & 0 & 0 \\
0 & 0 & 2 \\
-1 & 0 & 0  
\end{pmatrix}.
\]
Thus, we can naturally choose a basis that corresponds to the set of monomials $\calB_d$, identifying it
as a set of harmonic polynomials, given by
\[
\calB_{d}=\pg*{p_{d,j}}_{-d\leq j\leq d},
\]
where, for every $k\geq 0$,
\[
p_{d,k}=\sum_{j=0}^{\pint*{\frac{d-k}{2}}}(-1)^ju^{[k+j]}z^{[d-k-2j]}v^{[j]}
\]
and
\[
p_{d,-k}=\sum_{j=0}^{\pint*{\frac{d-k}{2}}}(-1)^ju^{[j]}z^{[d-k-2j]}v^{[k+j]},
\]
using the notation of the \textit{divided powers}, where we define
\[
u^{[m]}=\frac{1}{m!}u^m,\quad v^{[m]}=\frac{1}{m!}v^m,\quad z^{[m]}=\frac{1}{m!}z^m,
\]
for every $m\in\bbN$.
\begin{rem}
Since we have changed set of variables, to verify that all these are harmonic polynomials, we have to use the Laplace operator in the form
\[
\Delta=\pdv[2]{}{z}+\pdv{}{u,v}.
\]
\end{rem}
These polynomials may not appear particularly nice, but they can potentially be described more clearly in the following diagram:
\[
\begin{gathered}
\adjustbox{scale=0.75,center}{
\begin{tikzcd}[scale=0.5em]
\underset{\color{darkred}p_{2,2}}{\pa*{\dfrac{1}{2}u^{2}}} \ar[r,yshift=2.2,"F"]  & \underset{\color{darkred}p_{2,1}}{\pa*{\dfrac{1}{4}uz}} \ar[l,yshift=-2.2,"E"]\ar[r,yshift=2.2,"F"] & \underset{\color{darkred}p_{2,0}}{\pa*{\dfrac{1}{12}\pt*{z^{2}-2uv}}} \ar[l,yshift=-2.2,"E"]\ar[r,yshift=2.2,"F"] & \underset{\color{darkred}p_{2,-1}}{\pa*{\dfrac{1}{4}vz}} \ar[l,yshift=-2.2,"E"]\ar[r,yshift=2.2,"F"] & \underset{\color{darkred}p_{2,-2}}{\pa*{\dfrac{1}{2}v^{2}}}\ar[l,yshift=-2.2,"E"]\hphantom{.}
\end{tikzcd}}\\
\adjustbox{scale=0.75,center}{
\begin{tikzcd}
\underset{\color{darkred}p_{3,3}}{\pa*{\dfrac{1}{6}u^3}} \ar[r,yshift=2.2,"F"]  & \underset{\color{darkred}p_{3,2}}{\pa*{\dfrac{1}{12}u^2z}} \ar[l,yshift=-2.2,"E"]\ar[r,yshift=2.2,"F"] & \underset{\color{darkred}p_{3,1}}{\pa*{\dfrac{1}{30}u\pt*{z^2-uv}}} \ar[l,yshift=-2.2,"E"]\ar[r,yshift=2.2,"F"] & \underset{\color{darkred}p_{3,0}}{\pa*{\dfrac{1}{120}z\pt*{z^2-6uv}}} \ar[l,yshift=-2.2,"E"]\ar[r,yshift=2.2,"F"] & \underset{\color{darkred}p_{3,-1}}{\pa*{\dfrac{1}{30}v\pt*{z^2-uv}}}\ar[l,yshift=-2.2,"E"]\ar[r,yshift=2.2,"F"]& \underset{\color{darkred}p_{3,-2}}{\pa*{\dfrac{1}{12}v^2z}} \ar[l,yshift=-2.2,"E"]\ar[r,yshift=2.2,"F"] & \underset{\color{darkred}p_{3,-3}}{\pa*{\dfrac{1}{6}v^3}}\ar[l,yshift=-2.2,"E"]\hphantom{.}
\end{tikzcd}}\\
\vdots\\
\adjustbox{scale=0.75,center}{
\begin{tikzcd}
\underset{\color{darkred}p_{n,n}}{\pa*{\dfrac{1}{d!}u^d}} \ar[r,yshift=2.2,"F"]  & \underset{\color{darkred}p_{d,d-1}}{\pa*{\dfrac{1}{2d(d-1)!}u^{d-1}z}} \ar[l,yshift=-2.2,"E"]\ar[r,yshift=2.2,"F"] & \cdots \ar[l,yshift=-2.2,"E"]\ar[r,yshift=2.2,"F"] & \underset{\color{darkred}p_{d,-(d-1)}}{\pa*{\dfrac{1}{2d(d-1)!}v^{d-1}z}} \ar[l,yshift=-2.2,"E"]\ar[r,yshift=2.2,"F"] & \underset{\color{darkred}p_{d,-d}}{\pa*{\dfrac{1}{d!}v^d}}\ar[l,yshift=-2.2,"E"].
\end{tikzcd}}
\end{gathered}
\]

Now, before analyzing ternary forms, let us recall some results about binary forms. In particular, considering the variables
\[
u=x_1+\rmi x_2,\qquad v=x_1-\rmi x_2,
\]
we know from the work of B.~Reznick in \cite{Rez92} that the rank of the binary form 
\[
q_2^s=\bigl(x_1^2+x_2^2\bigr)^s=u^sv^s
\]
is minimal and hence equal to the rank of the central catalecticant map.
\begin{teo}[\cite{Rez92}*{Theorem 9.5}]
\label{teo decom Rez}
The minimal real decompositions of $q_2^s$ are given by
\[
q_2^s=\sum_{j=1}^{s+1}(a_jx_1+b_jx_2)^{2s},
\]
where the $2s+2$ points in the set
\[
\bigl\{\pm(a_j,b_j)\bigr\}_{j=1,\dots,s+1}
\]
are the vertices of a regular $(2s+2)$-gon inscribed in a circle of radius $2r(s)$, where
\[
r(s)=(s+1)^{-\frac{1}{2s}}\binom{2s}{s}^{-\frac{1}{2s}}.
\]
\end{teo}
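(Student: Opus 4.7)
The plan is to combine the apolarity lemma (\cite{IK99}*{Lemma 1.15}) with \autoref{Teo Apolar ideal} specialised to $n = 2$. By \autoref{cor dimens harm} we have $\dim \calH_2^{s+1} = 2$, and the conjugate forms $u = y_1 + \rmi y_2$ and $v = y_1 - \rmi y_2$ are both harmonic of degree $s+1$ by \autoref{Rem polinomi armoni u v}; hence the degree-$(s+1)$ part of the apolar ideal is
\[
\bigl(q_2^s\bigr)^{\perp}_{s+1} = \calH_2^{s+1} = \Span\bigl(u^{s+1},\, v^{s+1}\bigr).
\]
From \autoref{cor cat lower bound} we have $\rk\bigl(q_2^s\bigr) \geq s+1$, so any minimal real decomposition uses at least $s+1$ summands. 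By the apolarity lemma, a decomposition $q_2^s = \sum_{j=1}^{s+1}(a_j x_1 + b_j x_2)^{2s}$ exists precisely when the degree-$(s+1)$ form $\prod_{j=1}^{s+1}(-b_j y_1 + a_j y_2)$ cutting out the points $[a_j : b_j] \in \bbP^1$ lies in $\calH_2^{s+1}$, i.e.\ has the form $\alpha u^{s+1} + \beta v^{s+1}$ for some $\alpha, \beta \in \bbC$.

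Next I would impose the reality requirement. Evaluating $\alpha(y_1 + \rmi y_2)^{s+1} + \beta(y_1 - \rmi y_2)^{s+1}$ at $(y_1, y_2) = (\cos\theta, \sin\theta)$ gives $\rme^{2\rmi(s+1)\theta} = -\beta/\alpha$. Since the left-hand side has modulus one, the existence of $s+1$ real roots forces $\abs{\alpha} = \abs{\beta}$; writing $-\beta/\alpha = \rme^{\rmi\phi_0}$ yields the $s+1$ equally spaced angles
\[
\theta_k = \frac{\phi_0 + 2\pi k}{2(s+1)}, \qquad k = 0, \dots, s.
\]
The $2(s+1)$ unit vectors $\pm(\cos\theta_k, \sin\theta_k)$ are therefore the vertices of a regular $(2s+2)$-gon centred at the origin, with $\phi_0$ parameterising a global rotation of the configuration.

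Finally I would pin down the radius. The rotation $R_{\pi/(s+1)} \in \SO_2(\bbR)$ generates a cyclic group of order $s+1$ that permutes the projective points $\{[\cos\theta_k:\sin\theta_k]\}$ transitively while fixing $q_2^s$; combined with the fact that the decomposition coefficients are uniquely determined by the point configuration (since the powers $\ell_j^{2s}$ attached to distinct points are linearly independent), this forces all coefficients to coincide. Writing $(a_j, b_j) = \rho(\cos\theta_j, \sin\theta_j)$ and equating both sides of
\[
q_2^s = \rho^{2s}\sum_{k=0}^{s}\bigl(\cos\theta_k \cdot x_1 + \sin\theta_k \cdot x_2\bigr)^{2s}
\]
at $(x_1, x_2) = (\cos\phi, \sin\phi)$ reduces the problem to $1 = \rho^{2s}\sum_{k=0}^{s}\cos^{2s}(\phi - \theta_k)$. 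Doubling this sum to $2s + 2$ equally spaced angles via $\pi$-periodicity of $\cos^{2s}$ and invoking the classical moment identity
\[
\sum_{k=0}^{M-1}\cos^{2s}\!\bigl(\alpha + \tfrac{2\pi k}{M}\bigr) = \frac{M}{2^{2s}}\binom{2s}{s} \qquad \text{for } M > 2s,
\]
with $M = 2s + 2$, yields $\rho^{2s} = 2^{2s}/\bigl((s+1)\binom{2s}{s}\bigr)$, i.e.\ $\rho = 2r(s)$. The main obstacle is precisely this last numerical step: once the moment identity is invoked, the specific shape of $r(s)$ is immediate.
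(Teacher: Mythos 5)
The paper does not prove this statement; it is quoted verbatim as \cite{Rez92}*{Theorem 9.5} and used as a black box. What you have written is therefore an independent derivation, not something to be measured against a proof in the text. On its own terms it is correct, and it is genuinely a different route from Reznick's. His argument in \cite{Rez92} is built around Hilbert-type identities and the combinatorics of real representations (sums of $2s$-th powers, spherical-design--style averaging), whereas you start from \autoref{Teo Apolar ideal} specialised to $n=2$, so that $\bigl(q_2^s\bigr)^\perp_{s+1}=\calH_2^{s+1}=\Span(u^{s+1},v^{s+1})$, invoke the classical apolarity lemma to translate each candidate decomposition into a pencil of binary forms, impose reality to land on the regular-polygon configuration, and then fix the normalisation via a trigonometric moment identity. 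This has the advantage of reusing machinery already present in the paper.

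Two points deserve to be made explicit. First, the transitivity-of-rotation argument for equal coefficients requires you to say why the Waring decomposition supported on a fixed set of $s+1$ distinct projective points is unique: this is the linear independence of $\ell_1^{2s},\dots,\ell_{s+1}^{2s}$ in $S^{2s}\bbC^2$, which holds because $s+1\leq 2s+1=\dim S^{2s}\bbC^2$ and distinct points on a rational normal curve give independent Veronese images. Second, you pass from "minimal real decomposition" to "$s+1$ real points and a real degree-$(s+1)$ apolar form" a little quickly: one should observe that $\brk(q_2^s)\geq s+1$ forces $r=s+1$, so all summands are real linear forms, hence all $s+1$ projective points are real and the annihilating form $\prod_j(-b_jy_1+a_jy_2)$ is a \emph{real} element of $\Span(u^{s+1},v^{s+1})$; because $\bar{u}=v$, a real element is automatically of the form $\alpha u^{s+1}+\bar{\alpha}v^{s+1}$, so the condition $\abs{\alpha}=\abs{\beta}$ that you extract is not an extra constraint but is built in, and its $s+1$ roots are then automatically real and equally spaced. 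Both are minor clarifications rather than gaps; the overall proof holds.
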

\autoref{teo decom Rez} shows that the minimal real decompositions of $q_2^s$ are essentially unique up to an orthogonal transformation. In particular, we can write 
\[
q_2^s=\sum_{j=1}^{s+1}\big(2r(s)\cos(\tau_{j}) x_1+2r(s)\sin(\tau_{j})x_2\big)^{2s}=\sum_{j=1}^{s+1}\big(r(s)e^{\rmi\tau_{j}} u+r(s)e^{-\rmi\tau_{j}}v\big)^{2s},
\]
where 
\[
\tau_j=\frac{(j-1)\uppi}{s+1}
\]
for every $j=1,\dots,s+1$.
By apolarity, we can easily extend this result and obtain the uniqueness of decompositions even in the complex case. Indeed, by \autoref{Rem polinomi armoni u v} and \autoref{cor dimens harm}, we know that
\[
\bigl(q_2^s\bigr)^{\perp}=\bigl(u^{s+1},v^{s+1}\bigr).
\]
Hence, by the classical apolarity lemma (\cite{IK99}*{Lemma 1.15}), every decomposition of $q_2^s$ corresponds to a polynomial with distinct roots in its apolar ideal. Therefore, each linear combination of $u^{s+1}$ and $v^{s+1}$, up to scalars, can be written as
\[
f_{s+1}=u^{s+1}-e^{i(\theta+\rmi k)}v^{s+1},
\]
for some $\theta\in[0,2\uppi)$ and $k\in\bbR$ (with the only exception of $u^{s+1}$ and $v^{s+1}$). The roots of these polynomials are given by the projective points
\[
[u_j:v_j]=\bigl[e^{\rmi w_{k,\theta,j}}:e^{-\rmi w_{k,\theta,j}}\bigr],\quad j=1,\dots,s+1
\]
where
\[
w_{k,\theta,j}=\frac{2(j-1)\uppi+\theta+\rmi k}{2(s+1)}
\]
for every $j=1,\dots,s+1$. By solving a linear system, we obtain
\[
q_2^s=\sum_{j=1}^{s+1}\big(2r(s)\cos(w_{k,\theta,j}) x_1+2r(s)\sin(w_{k,\theta,j})x_2\big)^{2s}=\sum_{j=1}^{s+1}\big(r(s)e^{\rmi w_{k,\theta,j}} u+r(s)e^{-\rmi w_{k,\theta,j}}v\big)^{2s}.
\]
Thus, we have established the following theorem.
\begin{teo}
\label{minimal decomposition 2 var}
The minimal decompositions of $q_2^s$ are essentially unique, up to an orthogonal complex transformation, and are given by
\[
q_2^s=\sum_{j=1}^{s+1}\big(2r(s)\cos(\tau_{j}) x_1+2r(s)\sin(\tau_{j})x_2\big)^{2s}=\sum_{j=1}^{s+1}\big(r(s)e^{\rmi\tau_{j}} u+r(s)e^{-\rmi\tau_{j}}v\big)^{2s},
\]
where 
\[
r(s)=(s+1)^{-\frac{1}{2s}}\binom{2s}{s}^{-\frac{1}{2s}}.
\]
and
\[
\tau_j=\frac{(j-1)\uppi}{s+1}
\]
for every $j=1,\dots,s+1$.
\end{teo}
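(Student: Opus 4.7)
The plan is to invoke the classical apolarity lemma \cite{IK99}*{Lemma 1.15}, which identifies minimal Waring decompositions of $q_2^s$ with saturated ideals of $s+1$ distinct points in $\bbP^1$ contained in the apolar ideal $\bigl(q_2^s\bigr)^{\perp}$. The first step is therefore to pin down this apolar ideal. By \autoref{Rem polinomi armoni u v} the forms $u^{s+1}$ and $v^{s+1}$ are both harmonic and apolar to $q_2^s$, and they are linearly independent; combined with $\dim \calH_2^{s+1}=2$ from \autoref{cor dimens harm}, this gives $\calH_2^{s+1}=\langle u^{s+1},v^{s+1}\rangle$. Then \autoref{Teo Apolar ideal} yields $\bigl(q_2^s\bigr)^{\perp}=\bigl(u^{s+1},v^{s+1}\bigr)$.

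Next, I would parametrize the degree-$(s+1)$ piece of the ideal: up to scalar, every element is either $u^{s+1}$, $v^{s+1}$, or of the form $f_{s+1}=u^{s+1}-e^{\rmi(\theta+\rmi k)}v^{s+1}$ for suitable $\theta\in[0,2\uppi)$ and $k\in\bbR$. The two pure powers have a single $(s+1)$-fold root, so they cannot correspond to $s+1$ distinct points and thus never arise in a minimal decomposition; the remaining polynomials split into $s+1$ distinct linear factors whose roots are the projective points $\bigl[e^{\rmi w_{k,\theta,j}}:e^{-\rmi w_{k,\theta,j}}\bigr]$ with $w_{k,\theta,j}=(2(j-1)\uppi+\theta+\rmi k)/(2(s+1))$. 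By apolarity each configuration produces a Waring decomposition, and the coefficients in front of the $2s$-th powers can be read off by solving a Vandermonde-type linear system in the basis $\{u^{2s-i}v^i\}_{i=0,\dots,2s}$; comparison with Reznick's real configuration in \autoref{teo decom Rez} fixes the common radius $r(s)=(s+1)^{-1/(2s)}\binom{2s}{s}^{-1/(2s)}$.

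To conclude the uniqueness statement, I would interpret the two parameters $(\theta,k)$ as a complex orthogonal transformation: a rotation by $\theta/2$ combined with a scaling by $e^{k/2}$ in the $(u,v)$-variables is an element of $\Oa_2(\bbC)$ fixing $q_2$ and hence $q_2^s$, and it shifts every $w_{k,\theta,j}$ by the same constant. Thus the whole family collapses, modulo $\Oa_2(\bbC)$, to the distinguished case $\theta=k=0$, which is exactly Reznick's configuration. The main obstacle I anticipate is purely bookkeeping: verifying that the linear system solved above produces the same radius $r(s)$ for the complex configurations as Reznick obtained in the real case, so that the single scaling constant is genuinely determined by \autoref{teo decom Rez}; once this is checked, the uniqueness and the explicit formula follow simultaneously.
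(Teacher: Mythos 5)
Your proposal follows essentially the same route as the paper: pin down $\bigl(q_2^s\bigr)^{\perp}=\bigl(u^{s+1},v^{s+1}\bigr)$ via \autoref{Rem polinomi armoni u v} and \autoref{cor dimens harm}, invoke the classical apolarity lemma, parametrize the degree-$(s+1)$ piece as $u^{s+1}-e^{\rmi(\theta+\rmi k)}v^{s+1}$, extract the $s+1$ distinct roots, solve a linear system for the coefficients, and reduce to Reznick's real case $\theta=k=0$. The only real supplement you provide is spelling out why $(\theta,k)$ corresponds to an element of $\Oa_2(\bbC)$ (a phase shift $u\mapsto e^{\rmi(\theta+\rmi k)/2}u$, $v\mapsto e^{-\rmi(\theta+\rmi k)/2}v$ that preserves $uv=q_2$), a step the paper leaves implicit in the phrase ``essentially unique''; that addition is correct and harmless, so the argument matches the paper's.
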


\begin{figure}
\center
\begin{tikzpicture}
\begin{scope}[shift={(-3cm,0cm)}]
\draw[thick,->] (-2,0) -- (2,0) node[anchor=north east]{$x_1$};
\draw[thick,->] (0,-2) -- (0,2) node[anchor=north west]{$x_2$};
\foreach \x in {0,45,...,315}{
\pgfpathmoveto{\pgfpointpolar{\x}{1.2cm}}
\pgfpathlineto{\pgfpointpolar{\x+45}{1.2cm}}
\color{blue}
\pgfsetdash{{3pt}{2pt}}{0pt}
\pgfusepath{fill,stroke}}
\foreach \x in {0,45,...,315}
{\pgfpathcircle{\pgfpointpolar{\x}{1.2cm}}{2pt}
\color{blue}
\pgfusepath{fill}};
\end{scope}
\begin{scope}
\draw
node [shift={(-3cm,-3cm)},text width=5.3cm,rounded corners,fill=gray!10,inner sep=1ex,scale=0.7]
{Decomposition of $q_2^3$ (4 points):\\
roots of $f_4=u^4-v^4$.
};
\draw
node [shift={(3cm,-3cm)},text width=5.3cm,rounded corners,fill=gray!10,inner sep=1ex,scale=0.7]
{Decomposition of $q_2^4$ (5 points):\\
roots of $f_5=u^5-v^5$.
};
\end{scope}
\begin{scope}[shift={(3cm,0cm)}]
\draw[thick,->] (-2,0) -- (2,0) node[anchor=north east]{$x_1$};
\draw[thick,->] (0,-2) -- (0,2) node[anchor=north west]{$x_2$};
\foreach \x in {0,36,...,324}{
\pgfpathmoveto{\pgfpointpolar{\x}{1.2cm}}
\pgfpathlineto{\pgfpointpolar{\x+36}{1.2cm}}
\color{darkred}
\pgfsetdash{{3pt}{2pt}}{0pt}
\pgfusepath{fill,stroke}}
\foreach \x in {0,36,...,324}
{\pgfpathcircle{\pgfpointpolar{\x}{1.2cm}}{2pt}
\color{darkred}
\pgfusepath{fill}};
\end{scope} 
\end{tikzpicture}
\caption{Examples of decompositions for the polynomials $q_2^3$ and $q_2^4$. The blue octagon on the left represents the $4$ projective points obtained by the roots of the polynomial $u^4-v^4$, while the red decagon on the right represents the $5$ projective points obtained by the roots of the polynomial $u^5-v^5$ (points opposite to the origin represent the same point in $\bbP^1(\bbC)$).}
\end{figure}
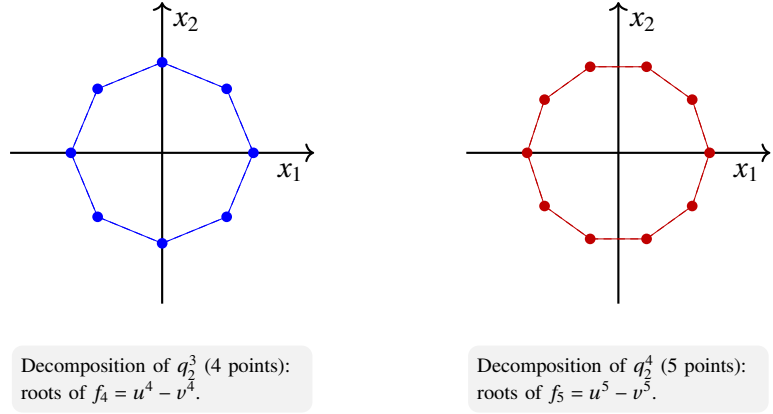

We now have all the tools to determine the border rank of powers of every quadratic form in three variables. We aim to prove the following theorem.
\begin{teo}
\label{teo border rank}
Let $g\in S^2\bbC^3$ be an arbitrary ternary quadratic form. For every $s\in\bbN$, if $g$ is degenerate, then 
\[
\brk \bigl(g^s\bigr)=\rk \bigl(g^s\bigr)=\binom{s+\rk g-1}{\rk g-1}.
\]
Otherwise,
\[
\brk\bigl(g^s\bigr)=\binom{s+2}{2}=\frac{(s+1)(s+2)}{2}.
\]
\end{teo}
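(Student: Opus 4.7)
The plan is to split the argument by the rank of $g \in \{1, 2, 3\}$. If $\rk g = 1$, then $g = \ell^2$ so $g^s = \ell^{2s}$ is already a pure power and $\rk(g^s) = \brk(g^s) = 1 = \binom{s}{0}$. If $\rk g = 2$, a $\GL_3(\bbC)$-change of coordinates reduces $g$ to $q_2 = x_1^2 + x_2^2$ (viewed as a form in three variables); since rank and border rank are $\GL$-invariant, \autoref{minimal decomposition 2 var} yields $\rk(q_2^s) = s+1$, while \autoref{cor cat lower bound} with $n=2$ yields $\brk(q_2^s) \geq s+1$, giving the required equality $s+1 = \binom{s+1}{1}$.

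In the non-degenerate case, a $\GL_3(\bbC)$-change of coordinates reduces $g$ to $q_3$, and the lower bound $\brk(q_3^s) \geq \binom{s+2}{2}$ is immediate from \autoref{cor cat lower bound}. For the matching upper bound I invoke border apolarity (\autoref{teo border apolarity}): it suffices to exhibit an ideal $I_{s+1} \in \Slip_{r,3}$ with $r = \binom{s+2}{2}$ satisfying $I_{s+1} \subseteq (q_3^s)^\perp$. By \autoref{Teo Apolar ideal}, this inclusion reduces to ensuring that $I_{s+1}$ has no elements below degree $s+1$ and that $(I_{s+1})_{s+1} \subseteq \calH_3^{s+1}$.

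I would construct $I_{s+1}$ using the explicit basis $\{p_{s+1,j}\}_{-(s+1)\leq j \leq s+1}$ of $\calH_3^{s+1}$. A dimension count forces $\dim(I_{s+1})_{s+1} = \dim \calD_{s+1} - r = s+2$, so $(I_{s+1})_{s+1}$ should be an $(s+2)$-dimensional subspace of the $(2s+3)$-dimensional $\mfso_3\bbC$-module $\calH_3^{s+1}$; a natural candidate, preserving a residual $\SO_2$-symmetry, is the span of a consecutive block of weights such as $\{p_{s+1,j}\}_{0 \leq j \leq s+1}$. The remaining components of $I_{s+1}$ are generated by multiplication with $\calD$, and one must verify by an inductive dimension count in each degree $a \geq s+1$ that $\dim (I_{s+1})_a = \dim \calD_a - h_{r,3}(a)$, using the decomposition of \autoref{prop decomposizione armonici} together with $\SO_3$-equivariance and Schur's lemma to reduce the bookkeeping to a comparison of isotypic components.

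The main obstacle is proving $I_{s+1} \in \Slip_{r,3}$. I would attempt this by presenting an explicit flat family of ideals of $r$ distinct points in projective space, chosen with matching $\SO_2$-symmetry so as to respect the symmetry of $I_{s+1}$, whose limit as the parameter tends to zero recovers $I_{s+1}$. Controlling the Hilbert function in the limit is the delicate part: one can either invoke semicontinuity, checking that the limit cannot have a smaller Hilbert function in any degree, or construct the degeneration via an explicit equivariant deformation. Once $I_{s+1} \in \Slip_{r,3}$ is confirmed, border apolarity yields $\brk(q_3^s) \leq r$, completing the equality $\brk(q_3^s) = \binom{s+2}{2}$.
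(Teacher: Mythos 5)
Your setup is correct and your guess for the generating set of the auxiliary ideal matches the paper exactly: the paper takes $I_{s+1}=(p_{s+1,s+1},\dots,p_{s+1,0})$, i.e.\ the span of the consecutive block of weights $0\le j\le s+1$ inside $\calH_3^{s+1}$, and the lower bound and degenerate cases are handled as you describe. The containment $I_{s+1}\subseteq(q_3^s)^\perp$ is also immediate from \autoref{Teo Apolar ideal} once the generators lie in $\calH_3^{s+1}$, as you note.

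The genuine gap is in the step you yourself flag as the main obstacle: establishing $I_{s+1}\in\mathrm{Slip}_{r,3}$. You propose to build an explicit flat family of ideals of $r$ distinct points degenerating to $I_{s+1}$, controlling the Hilbert function by semicontinuity or by an equivariant deformation — but no such family is exhibited, and constructing one (together with a proof of flatness and constancy of the multigraded Hilbert function along the family) is in fact harder than the rest of the theorem combined. The paper avoids this entirely by invoking \autoref{prop Slip} (Mańdziuk): because the classical Hilbert scheme $\bm{\calH ilb}_r(\bbP^2)$ is smooth and irreducible (Fogarty), \emph{any} saturated homogeneous ideal with Hilbert function $h_{r,n}$ automatically lies in $\mathrm{Slip}_{r,n}$. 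This converts the delicate ``find a degeneration'' problem into the purely algebraic claim that $I_{s+1}$ is saturated. That in turn is proved by a Gr\"obner basis computation: with respect to a lexicographic order with $z>u>v$, the generators $p_{s+1,j}$ form a Gr\"obner basis (verified by Buchberger's criterion), the leading ideal is the monomial ideal of all degree-$(s+1)$ monomials in $u,z$, which is the ideal of a fat point and hence saturated (\autoref{lem monomial ideal saturated n=3}), and \autoref{lem 4.12Man} then gives saturation of $I_{s+1}$ itself. The Hilbert-function bookkeeping you gesture at is then a short direct count, not an inductive isotypic decomposition. Until you either supply the explicit degeneration you allude to, or replace it with a criterion like \autoref{prop Slip}, the proof of the upper bound is incomplete.
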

In the case where a quadratic form is degenerate, it means that, according to Sylvester's law, it can be expressed, after a linear change of coordinates, as a quadratic form in two variables. Therefore, in this case, being degenerate is equivalent to not being concise.
As mentioned earlier, in order to prove \autoref{teo border rank}, we employ border apolarity. However, to establish this, we first need to determine an ideal that belongs to $\mathrm{Slip_{r,n}}$.

Next, we recall the classical notion of the \textit{Hilbert scheme} parametrizing $r$ points of the projective space $\bbP^n$. This concept was initially developed by A.~Grothendieck (see \cite{Gro61}). Essentially, it is a scheme that parametrizes all the closed subschemes of $\bbP^n$ of length $r$. 

It is important to note the distinction from the multigraded Hilbert scheme $\mathrm{Hilb}^{\scriptscriptstyle{h_{r,n}}}_{{\scriptscriptstyle S[\bbP^n]}}$, defined in \autoref{section apolarity}, which parametrizes all homogeneous ideals of $S[\bbP^n]$, with a fixed Hilbert function $h_{r,n}$. In particular, $\mathrm{Hilb}^{\scriptscriptstyle{h_{r,n}}}_{{\scriptscriptstyle S[\bbP^n]}}$ may contain both saturated and non-saturated ideals (for further details about Hilbert schemes, refer to \cite{EH00}*{Section VI.2.2}).

Nevertheless, we need to consider classical Hilbert schemes due to a specific observation made by T.~Mańdziuk in \cite{Man22}. Let $\bm{\calH ilb}_r\bigl(\bbP^n\bigr)$ denote the Hilbert scheme of the set of $r$-tuples of points in $\bbP^n$. The result of T.~Mańdziuk is applicable to the case of the projective plane $\bbP^2$, whose Hilbert scheme $\bm{\calH ilb}_r\bigl(\bbP^2\bigr)$ is smooth and irreducible (see \cite{Fog68}*{Theorem 2.4}). 
\begin{prop}[\cite{Man22}*{Remark 4.7}]
\label{prop Slip}
If $\bm{\calH ilb}_r\bigl(\bbP^n\bigr)$ is irreducible, $I$ is a saturated ideal and 
\[
I\in\mathrm{Hilb}^{\scriptscriptstyle{h_{r,n}}}_{{\scriptscriptstyle S[\bbP^n]}},
\]
then $I\in\mathrm{Slip}_{r,n}$.
\end{prop}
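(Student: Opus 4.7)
The plan is to identify the locus of saturated ideals inside $\mathrm{Hilb}^{\scriptscriptstyle{h_{r,n}}}_{{\scriptscriptstyle S[\bbP^n]}}$ with an open subscheme of the classical Hilbert scheme $\bm{\calH ilb}_r(\bbP^n)$, and then to use irreducibility to conclude that every such ideal is a limit of ideals of distinct points. First I would introduce the subset $U \subseteq \bm{\calH ilb}_r(\bbP^n)$ consisting of length-$r$ subschemes $X \subseteq \bbP^n$ whose saturated ideal has Hilbert function exactly $h_{r,n}$. Since $h_{r,n}$ is the pointwise maximum of Hilbert functions of length-$r$ schemes, and since the Hilbert function is upper semicontinuous in flat families, $U$ is open in $\bm{\calH ilb}_r(\bbP^n)$. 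Moreover, $r$-tuples of distinct points in sufficiently general position impose independent conditions on forms of every degree, so their Hilbert function equals $h_{r,n}$; hence $U$ is nonempty.

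Next I would construct a natural morphism $\psi \colon U \to \mathrm{Hilb}^{\scriptscriptstyle{h_{r,n}}}_{{\scriptscriptstyle S[\bbP^n]}}$ sending a subscheme to its saturated graded ideal. The functorial assignment requires verifying that, for any flat family $\calX \subseteq \bbP^n_T$ of length-$r$ subschemes with all fibers in $U$, the graded ideal $\bigoplus_{d \geq 0} \pi_{\ast}\calI_{\calX}(d)$ is $T$-flat with Hilbert function $h_{r,n}$ on every fiber; this flatness follows from the constancy of the Hilbert function of the fibers, by the standard cohomology-and-base-change criterion. The morphism $\psi$ is injective on points because a saturated ideal determines its vanishing scheme, and its image is precisely the locus of saturated ideals in $\mathrm{Hilb}^{\scriptscriptstyle{h_{r,n}}}_{{\scriptscriptstyle S[\bbP^n]}}$.

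With $\psi$ available, the remainder is topological. By hypothesis $\bm{\calH ilb}_r(\bbP^n)$ is irreducible, hence so is the nonempty open subset $U$. The locus of reduced $r$-tuples of distinct points in general position is a nonempty open subset of $U$, and therefore dense in $U$. Under $\psi$ this subset maps into $\mathrm{Sip}_{r,n}$, so continuity of $\psi$ yields
\[
\psi(U) \subseteq \overline{\psi(\text{distinct tuples in }U)} \subseteq \overline{\mathrm{Sip}_{r,n}} = \mathrm{Slip}_{r,n},
\]
the closures being taken in $\mathrm{Hilb}^{\scriptscriptstyle{h_{r,n}}}_{{\scriptscriptstyle S[\bbP^n]}}$. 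Given any saturated $I \in \mathrm{Hilb}^{\scriptscriptstyle{h_{r,n}}}_{{\scriptscriptstyle S[\bbP^n]}}$, its vanishing scheme $X = V(I)$ has length $r$ and Hilbert function $h_{r,n}$, hence lies in $U$ with $\psi(X) = I$; thus $I \in \mathrm{Slip}_{r,n}$.

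The hardest step is the rigorous construction of $\psi$ and the identification of its image with the full saturated locus: this amounts to controlling how saturation of homogeneous ideals behaves in flat families, which works cleanly only because we have restricted to the open subscheme $U$ where the Hilbert function is locally constant. Once the morphism is in place the conclusion is essentially formal, since irreducibility of $\bm{\calH ilb}_r(\bbP^n)$ transfers directly to $U$, and the density of reduced configurations with the generic Hilbert function inside $U$ is an elementary open-dense argument.
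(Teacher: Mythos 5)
The paper itself gives no proof of this proposition; it simply cites Mańdziuk's Remark~4.7, so there is no internal argument to compare against. Your reconstruction is correct and is essentially the standard one used in that reference: restrict to the open locus $U$ of the classical Hilbert scheme where the Hilbert function is the generic one $h_{r,n}$, use cohomology-and-base-change on that locus to build the comparison morphism $\psi\colon U\to \mathrm{Hilb}^{\scriptscriptstyle h_{r,n}}_{\scriptscriptstyle S[\bbP^n]}$, and then transfer irreducibility and the density of the reduced locus through $\psi$ into $\overline{\mathrm{Sip}_{r,n}}=\mathrm{Slip}_{r,n}$. Two small points worth tightening. First, the semicontinuity step should be phrased as: for each fixed $d$, $t\mapsto\dim\bigl(I_{X_t}\bigr)_d$ is upper semicontinuous, hence the locus $\dim\bigl(S/I_{X_t}\bigr)_d=h_{r,n}(d)$ is open; since Castelnuovo--Mumford regularity of length-$r$ subschemes is bounded, only finitely many $d$ matter, so $U$ is indeed open. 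Second, saying ``a saturated ideal determines its vanishing scheme'' is the converse of what injectivity of $\psi$ needs; the relevant fact is that a finite subscheme of $\bbP^n$ is recovered as the closed subscheme cut out by its saturated homogeneous ideal, which is what makes $\psi$ a set-theoretic section of the truncation/saturation map on the saturated locus. With those cosmetic fixes the argument is a faithful rendering of the cited result.
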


Therefore, to demonstrate that $I_{d}\in\mathrm{Slip}_{r,n}$, it suffices to prove that $I_d$ is a saturated ideal,where
\[
I_{d}=(p_{d,d},p_{d,d-1},\dots,p_{d,0}).
\]
To establish this, we start by demonstrating that its leading ideal $\LT(I_d)$ is saturated. For this purpose, we require the following lemma, which represents a well-known fact, concerning the ideal of a fat point (refer to \cite{Gim89} for further details on fat points).
\begin{lem}
\label{lem monomial ideal saturated n=3}
For every $s\in\bbN$, the monomial ideal
\[
J_d=\bigl(u^d,u^{d-1}z,\dots,uz^{d-1},z^d\bigr)\subseteq\bbK[u,z,v],
\]
corresponding to the ideal generated by all the monomials of degree $d$ in the variables $u$ and $z$, is saturated.
\end{lem}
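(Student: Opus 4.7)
The plan is to verify the definition of saturation with respect to the irrelevant maximal ideal $\mfm = (u,z,v) \subseteq \bbK[u,z,v]$ by a direct monomial calculation, exploiting the fact that $J_d$ is generated by monomials not involving $v$. The key observation is that $J_d = (u,z)^d$, so a monomial $u^a z^b v^c$ lies in $J_d$ if and only if $a+b \geq d$, a condition that is manifestly independent of the exponent $c$. Since $J_d$ is monomial, membership of arbitrary polynomials in $J_d$ can be read off term by term, and therefore the whole problem is reduced to combinatorics on triples of exponents.

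From this combinatorial description the relevant colon ideal is immediate: for any monomial $u^a z^b v^c$, the product $v \cdot u^a z^b v^c = u^a z^b v^{c+1}$ belongs to $J_d$ under exactly the same condition $a+b \geq d$, so $(J_d : v) = J_d$, and iterating yields $(J_d : v^N) = J_d$ for every $N \in \bbN$. If now $f \in \bbK[u,z,v]$ satisfies $\mfm^N f \subseteq J_d$ for some $N$, then in particular $v^N f \in J_d$, and hence $f \in J_d$ by the previous step. This shows $(J_d :_{\bbK[u,z,v]} \mfm^\infty) = J_d$, which is precisely saturation with respect to $\mfm$.

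I do not anticipate any real obstacle here: the lemma is essentially a standard fact, because $J_d = (u,z)^d$ is $(u,z)$-primary and its unique associated prime $(u,z)$ is strictly contained in $\mfm$, so there is no $\mfm$-primary component to strip off. Geometrically, $J_d$ is the saturated ideal of the fat point of multiplicity $d$ supported at $[0:0:1] \in \bbP^2$, and ideals of closed subschemes of projective space are automatically saturated; the combinatorial argument above is just the transparent book-keeping that realises this principle in the concrete monomial setting needed later.
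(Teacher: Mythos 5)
Your argument is correct and complete. The paper itself does not give a proof of this lemma: it simply asserts it as ``a well-known fact, concerning the ideal of a fat point'' with a pointer to the fat-points literature, and moves on. You instead supply the elementary verification. Identifying $J_d=(u,z)^d$, noting that a monomial $u^az^bv^c$ lies in $J_d$ precisely when $a+b\geq d$ (independently of $c$), concluding $(J_d:v^N)=J_d$ for all $N$, and finally observing that $\mfm^N f\subseteq J_d$ forces $v^Nf\in J_d$ and hence $f\in J_d$ is exactly the right chain of reductions, and each step is sound. What this buys over the paper's treatment is self-containment and transparency: one does not need to know that the saturated ideal of the fat point $d\cdot[0:0:1]$ is $(u,z)^d$, or appeal to the general fact that defining ideals of closed subschemes are saturated; the monomial book-keeping stands on its own. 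Your closing remark correctly places the computation in that geometric context, which is also the intuition behind the paper's citation. One very small point of hygiene: the statement (as in the paper) quantifies over $s\in\bbN$ but the ideal is indexed by $d$; you silently work with $d$, which is the right reading, but it is worth flagging the mismatch in the statement rather than inheriting it.
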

Next, we need to prove that the ideal $J_d$ of \autoref{lem monomial ideal saturated n=3} is indeed the leading ideal of $I_d$. To demonstrate this, we simply need to show that the chosen generators form a Gr\"obner basis of $I_d$. The classical method for proving this is to use Buchberger's criterion, which is presented in its standard version in \cite{Eis95}*{Theorem 15.8}. However, we will employ a specific version of this criterion, described accurately by W.~Decker and F.-O.~Schreyer in \cite{DS09}*{Theorem 2.3.9}, which significantly simplifies the verification process.
\begin{teo}[Buchberger's criterion]
Let $f_1,\dots,f_r\in\bbK[x_1,\dots,x_n]$ and let $I=(f_1,\dots,f_r)$. For every $j=2,\dots,r$, let $M_j$ denote the ideal
\[
M_j=\bigl(\LT(f_1),\dots,\LT(f_{j-1})\bigr):\bigl(\LT(f_j)\bigr).
\]
Then $\{f_1,\dots,f_r\}$ is a Gr\"obner basis for $I$ if and only if for every $j=2,\dots,r$ and each minimal generator $x^\alpha$ of $M_j$, the remainder of the division of $x^\alpha f_j$ by $f_1,\dots,f_r$ is equal to zero.
\end{teo}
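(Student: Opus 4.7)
The plan is to reduce this refined criterion to the classical form of Buchberger's criterion (e.g.~\cite{Eis95}*{Theorem 15.8}), which asserts that $\{f_1,\dots,f_r\}$ is a Gr\"obner basis of $I$ if and only if $S(f_i,f_j)$ reduces to zero modulo $\{f_1,\dots,f_r\}$ for every pair $1\leq i<j\leq r$. The direction $(\Rightarrow)$ is immediate: each $x^\alpha f_j$ lies in $I$, so by definition of Gr\"obner basis its division by $f_1,\dots,f_r$ yields remainder zero. All the substance is in the converse.

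For $(\Leftarrow)$ I set $\mu_{ij}=\lcm(\LT(f_i),\LT(f_j))/\LT(f_j)$ for $i<j$; then $M_j$ is exactly the monomial ideal generated by $\mu_{1j},\dots,\mu_{j-1,j}$. The key observation is that $\mu_{ij}f_j$ has leading monomial $\lcm(\LT(f_i),\LT(f_j))$, and a single reduction step against $f_i$ turns it into a non-zero scalar multiple of $S(f_i,f_j)$; hence $\mu_{ij}f_j$ has remainder zero upon division by $\{f_1,\dots,f_r\}$ if and only if $S(f_i,f_j)$ reduces to zero. Consequently the hypothesis immediately yields $S(f_i,f_j)\to 0$ whenever $\mu_{ij}$ is a minimal generator of $M_j$. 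To cover the remaining pairs I will argue by induction on $j$, invoking the classical chain (LCM) criterion: if $\LT(f_k)\mid\lcm(\LT(f_i),\LT(f_j))$ for some $k\neq i,j$, then $S(f_i,f_j)$ admits a monomial combination in terms of $S(f_i,f_k)$ and $S(f_k,f_j)$ whose summands have leading monomial strictly below $\lcm(\LT(f_i),\LT(f_j))$, so that $S(f_i,f_k)\to 0$ and $S(f_k,f_j)\to 0$ together imply $S(f_i,f_j)\to 0$.

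The base case $j=2$ is trivial because $M_2$ is principal with unique minimal generator $\mu_{12}$. For the inductive step, assuming the claim holds for all $b<j$, fix $i<j$: if $\mu_{ij}$ is minimal in $M_j$ we are done by the key observation, otherwise some minimal generator $\mu_{kj}$ (with $k\neq i$ and $k<j$) divides $\mu_{ij}$, equivalently $\LT(f_k)\mid\lcm(\LT(f_i),\LT(f_j))$; then $S(f_i,f_k)\to 0$ by the inductive hypothesis (both indices are $<j$), while $S(f_k,f_j)\to 0$ by the key observation applied to the minimal generator $\mu_{kj}$, and the chain criterion lets me conclude $S(f_i,f_j)\to 0$. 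The main technical obstacle is the chain criterion itself, provable by routine manipulation of the identity $\frac{L}{\LT(f_i)}f_i-\frac{L}{\LT(f_j)}f_j=\frac{L}{L_{ik}}S(f_i,f_k)+\frac{L}{L_{kj}}S(f_k,f_j)$ with $L=\lcm(\LT(f_i),\LT(f_j))$ and $L_{ab}=\lcm(\LT(f_a),\LT(f_b))$, together with the bookkeeping that $L/L_{ab}$ multiplies an S-polynomial whose leading monomial is strictly below $L_{ab}$; this is a standard companion to Buchberger's theorem and I would simply cite it.
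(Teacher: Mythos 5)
You are proving a statement the paper itself does not prove: it is quoted from Decker--Schreyer \cite{DS09}*{Theorem 2.3.9}, whose own argument runs through syzygies of the leading terms (Schreyer's approach), so there is no in-paper proof to match; your plan of reducing to the classical S-pair criterion plus the chain (lcm) criterion is a legitimate alternative route, and your forward direction is fine. However, two steps of the converse have genuine gaps. First, the ``key observation'' is not an equivalence as stated: the division algorithm applied to $\mu_{ij}f_j$ need not perform its first reduction against $f_i$. Any $f_{l}$ with $\LT(f_{l})\mid \lcm\bigl(\LT(f_i),\LT(f_j)\bigr)$ can be the divisor chosen at the first step (and unless you fix a determinate selection rule that avoids $f_j$ itself, e.g.\ smallest admissible index, the algorithm could reduce against $f_j$ and the hypothesis ``remainder of $x^\alpha f_j$ is zero'' becomes vacuous). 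So from the hypothesis you only learn that $S(f_j,f_{l_0})$ reduces to zero for the particular index $l_0$ the algorithm selects, not that $S(f_i,f_j)$ does for your chosen $i$. This is repairable --- minimality of $x^\alpha$ forces $\mu_{l_0 j}=x^\alpha$, so $(l_0,j)$ can play the role of $(i,j)$, and the other pairs sharing the same minimal generator must then also be routed through the chain criterion --- but none of this is in your write-up.

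Second, the chain criterion does not deliver what your induction needs. From $S(f_i,f_k)$ and $S(f_k,f_j)$ reducing to zero one obtains, exactly as in the identity you quote, a representation of $S(f_i,f_j)$ as $\sum_l a_l f_l$ with all $\LT(a_l f_l)$ strictly below $\lcm\bigl(\LT(f_i),\LT(f_j)\bigr)$ --- an lcm- (standard) representation --- but \emph{not} that the division algorithm reduces $S(f_i,f_j)$ to zero; that stronger conclusion can genuinely fail. Since your inductive invariant is literal reduction to zero, the inductive step does not close as written: at later stages you feed into the chain criterion pairs for which you only have lcm-representations. The fix is standard: run the induction with ``$S(f_i,f_j)$ admits an lcm-representation'' as the invariant (remainder zero implies it, and the chain identity composes such representations after multiplying by the monomials $L/L_{ik}$ and $L/L_{kj}$), and conclude with the standard-representation form of Buchberger's criterion (if every S-polynomial has a representation with leading monomials strictly below the corresponding lcm, then the set is a Gr\"obner basis) rather than the remainder-zero form. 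With these two repairs your reduction works; as written, the argument has a gap at precisely the two places where the refined criterion differs from the classical one.
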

We can utilize this criterion to establish that the ideal $J_d$ of \autoref{lem monomial ideal saturated n=3} is indeed the leading ideal of $I_d$.
\begin{lem}
For every $s\in\bbN$, the set 
\[
\calB_{d}=\pg*{p_{d,0},\dots,p_{d,d}}
\]
is a Gr\"obner basis of the ideal $I_{d}$ with respect to the lexicographic monomial order defined by the relation 
\[
z>_{\text{lex}}u>_{\text{lex}}v.
\]
\end{lem}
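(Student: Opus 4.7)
The strategy is to apply Buchberger's criterion as formulated above, which requires first identifying the leading terms of the generators. Inspection of
\begin{equation*}
p_{d,k}=\sum_{j=0}^{\pint*{\frac{d-k}{2}}}(-1)^j u^{[k+j]}z^{[d-k-2j]}v^{[j]}
\end{equation*}
shows that the summand with $j=0$ is the unique one containing the maximal power $z^{d-k}$, so with respect to the lex order $z>_{\text{lex}}u>_{\text{lex}}v$ the leading term $\LT(p_{d,k})$ is a nonzero scalar multiple of $u^k z^{d-k}$. Ordering the generators as $f_0=p_{d,0},\dots,f_d=p_{d,d}$, an elementary monomial colon computation gives, for every $k\geq 1$,
\begin{equation*}
M_{k+1}=\bigl(u^0 z^d,uz^{d-1},\dots,u^{k-1}z^{d-k+1}\bigr):\bigl(u^k z^{d-k}\bigr)=(z^k,z^{k-1},\dots,z)=(z),
\end{equation*}
so the unique minimal generator to be tested is the variable $z$, and Buchberger's criterion reduces to showing that $z\cdot p_{d,k}$ has remainder zero when divided by $\{p_{d,0},\dots,p_{d,d}\}$ for every $1\leq k\leq d$.

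The key step is to establish the explicit identity
\begin{equation*}
z\cdot p_{d,k}=\frac{d-k+1}{k}\,u\cdot p_{d,k-1}+\frac{d+k+1}{k}\,v\cdot p_{d,k+1}\qquad(1\leq k\leq d),
\end{equation*}
with the convention $p_{d,d+1}:=0$. Granted this identity, the division algorithm proceeds in at most two steps: since $\LT(u\cdot p_{d,k-1})$ is proportional to $\LT(z\cdot p_{d,k})=u^k z^{d-k+1}$, subtracting the appropriate multiple of $u\cdot p_{d,k-1}$ cancels the leading term, leaving exactly $\frac{d+k+1}{k}v\cdot p_{d,k+1}$, which is then eliminated in a single further subtraction; in the boundary case $k=d$ only the first step is needed. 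Thus the remainder is zero, and the Gr\"obner basis property follows from Buchberger's criterion.

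The main obstacle is the verification of the identity itself. I would carry it out by expanding both sides in the monomial basis $\{u^{k+j}z^{d-k-2j+1}v^j\}_j$: after multiplying $p_{d,k-1}$ by $u$ (respectively $p_{d,k+1}$ by $v$), the exponent patterns match those of $z\cdot p_{d,k}$ term by term. The case $j=0$ pins down the coefficient of $u\cdot p_{d,k-1}$ to $(d-k+1)/k$, and for every $j\geq 1$ the resulting condition for the coefficient of $v\cdot p_{d,k+1}$ simplifies, after clearing common factorials, to a single linear equation whose solution $(d+k+1)/k$ is independent of $j$. Conceptually, this reflects the fact that the image of the multiplication map $\calH_3^1\otimes\calH_3^d\to\calD_{d+1}$ is isomorphic as an $\mfso_3\bbC$-module to $\calH_3^{d+1}\oplus q_3\calH_3^{d-1}$, whose $H$-weight-$2k$ subspace is at most two-dimensional; since $u\cdot p_{d,k-1}$, $v\cdot p_{d,k+1}$, and $z\cdot p_{d,k}$ all belong to this subspace, a nontrivial linear relation among them must hold.
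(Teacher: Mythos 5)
Your proof is correct and follows essentially the same route as the paper: both apply Buchberger's criterion, identify the relevant colon ideals as $(z)$, and reduce the verification to the two-term identity $z\,p_{d,k}=\tfrac{d-k+1}{k}\,u\,p_{d,k-1}+\tfrac{d+k+1}{k}\,v\,p_{d,k+1}$, checked by direct coefficient comparison in the monomial basis. The closing $\mfso_3\bbC$-weight-space observation explaining why such a linear relation must exist is a pleasant conceptual addition not present in the paper, but the underlying argument coincides.
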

\begin{proof}
By Buchberger's criterion, it suffices to prove that, for every $k=0,\dots,s$, considering the colon ideal
\[
M_k=\bigl(z^d,\dots,z^{d-k+1}u^{k-1}\bigr): \bigl(z^{d-k}u^{k}\bigr)=(z),
\]
the remainder of $zp_{d,k}$ divided by $p_{d,0},\dots,p_{d,d}$ is equal to zero. If $k=s$, then we have
\[
zp_{d,d}=zu^d=up_{d,d-1}
\]
and the statement holds. So, given $k\leq d-1$, since
\[
zp_{d,k}=z\sum_{j=0}^{\pint*{\frac{d-k}{2}}}(-1)^ju^{[k+j]}z^{[d-k-2j]}v^{[j]}=\sum_{j=0}^{\pint*{\frac{d-k}{2}}}(-1)^j(d-k-2j+1)u^{[k+j]}z^{[d-k-2j+1]}v^{[j]},
\]
we can proceed with Buchberger's algorithm by first considering $\LT(h_{d,k-1})=z^{[d-k+1]}u^{[k-1]}$. Indeed we obtain
\[
zp_{d,k}=\frac{\LT(zp_{d,k})}{\LT(p_{d,k-1})}p_{d,k-1}+r_1=\frac{d-k+1}{k}up_{d,k-1}+r_1,
\]
where $r_1$ is the first remainder. In particular, we have
\begin{align*}
r_1&=zp_{d,k}-\frac{d-k+1}{k}up_{d,k-1}\\
&=\sum_{j=0}^{\pint*{\frac{d-k}{2}}}(-1)^j(d-k-2j+1)u^{[k+j]}z^{[d-k-2j+1]}v^{[j]}+\\
&\hphantom{{}={}}{-}\frac{d-k+1}{k}u\sum_{j=0}^{\pint*{\frac{d-k+1}{2}}}(-1)^ju^{[k+j-1]}z^{[d-k-2j+1]}v^{[j]}.
\end{align*}
Now, if $d-k$ is even, then
\[
\pint*{\frac{d-k}{2}}=\pint*{\frac{d-k+1}{2}},
\]
while if it is odd, we observe that the coefficient $d-k-2j-1$ vanishes for 
\[
j=\pint*{\frac{d-k+1}{2}}
\]
and hence we can add a term in the first sum. This allows us to use a formula with only one sum, given by
\begin{align*}
r_1&=\sum_{j=1}^{\pint*{\frac{d-k+1}{2}}}(-1)^j\bigg((d-k-2j+1)-\frac{(d-k+1)(k+j)}{k}\bigg)u^{[k+j]}z^{[d-k-2j+1]}v^{[j]}\\
&=\sum_{j=1}^{\pint*{\frac{d-k+1}{2}}}(-1)^j\frac{(d-k-2j+1)k-(d-k+1)(k+j)}{k}u^{[k+j]}z^{[d-k-2j+1]}v^{[j]}\\
&=\sum_{j=1}^{\pint*{\frac{d-k+1}{2}}}(-1)^{j-1}\frac{(d+k+1)j}{k}u^{[k+j]}z^{[d-k-2j+1]}v^{[j]}\\
&=\frac{d+k+1}{k}v\sum_{j=1}^{\pint*{\frac{d-k+1}{2}}}(-1)^{j-1}u^{[k+j]}z^{[d-k-2j+1]}v^{[j-1]}\\
&=\frac{d+k+1}{k}v\sum_{j=0}^{\pint*{\frac{d-k-1}{2}}}(-1)^{j}u^{[k+j+1]}z^{[d-k-2j-1]}v^{[j]}\\
&=\frac{d+k+1}{k}vp_{d,k+1},
\end{align*}
which guarantees that the last remainder is equal to zero.
\end{proof}
Now, for every ideal $I$, we use the notation $\bar{I}$ to denote the saturation of $I$. To conclude, we need another nice property that T.~Mańdziuk proves in \cite{Man22}, given in the following lemma.
\begin{lem}[\cite{Man22}*{Lemma 4.12}]
\label{lem 4.12Man}
Let $I$ be an ideal in $S[\bbP^n]$ and let $<$ be a monomial order. Then 
\[
\LT_<\big(\bar{I}\big)\subseteq\bar{\LT_<(I)}.
\]
In particular, if $I$ is a homogeneous ideal and $\LT_<(I)$ is a saturated ideal, then $I$ is a saturated ideal.
\end{lem}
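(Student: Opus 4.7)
The strategy is to prove the inclusion $\LT_<(\bar I)\subseteq\bar{\LT_<(I)}$ directly from the definition of saturation, and then deduce the ``in particular'' clause via a Hilbert-function comparison.

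For the inclusion, I would pick an arbitrary nonzero $f\in\bar I$; since $\LT_<(\bar I)$ is generated by the leading terms of such elements, it suffices to show $\LT_<(f)\in\bar{\LT_<(I)}$. By definition of the saturation, there exists $N\in\bbN$ with $\mfm^N f\subseteq I$, where $\mfm=(x_0,\dots,x_n)$ denotes the irrelevant ideal. Hence for every monomial $\bfx^{\alpha}$ with $|\alpha|=N$ one has $\bfx^{\alpha}f\in I$, and because $\LT_<(\bfx^{\alpha}f)=\bfx^{\alpha}\LT_<(f)$, this yields $\bfx^{\alpha}\LT_<(f)\in\LT_<(I)$ for all such $\alpha$. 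Consequently $\mfm^N\LT_<(f)\subseteq\LT_<(I)$, i.e.\ $\LT_<(f)\in\bar{\LT_<(I)}$, which gives the desired containment.

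For the ``in particular'' assertion, assume that $I$ is homogeneous and $\LT_<(I)$ is saturated, so that $\bar{\LT_<(I)}=\LT_<(I)$. The inclusion just established then reads $\LT_<(\bar I)\subseteq\LT_<(I)$, and the reverse inclusion is trivial from $I\subseteq\bar I$; hence $\LT_<(I)=\LT_<(\bar I)$. Since a homogeneous ideal and its initial ideal share the same Hilbert function (the standard monomials form a vector-space basis of the quotient in each degree), I obtain $\dim_{\bbC}I_d=\dim_{\bbC}\bar I_d$ for every $d\in\bbN$; combined with the containment $I\subseteq\bar I$, this forces $I=\bar I$, i.e.\ $I$ is saturated.

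The only delicate point I foresee is the bookkeeping around homogeneity: one must know that $\bar I$ is again homogeneous (so the argument on a single homogeneous $f$ is representative) and that the classical identity $\mathrm{HF}(S/I)=\mathrm{HF}(S/\LT_<(I))$ applies in the present multigraded setup. Both are standard, and once they are in place the proof reduces to the elementary observation $\LT_<(\bfx^{\alpha}f)=\bfx^{\alpha}\LT_<(f)$ together with a dimension count.
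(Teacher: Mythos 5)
The paper does not actually provide a proof of this lemma: it is imported verbatim as Lemma~4.12 from Ma\'ndziuk's article \cite{Man22} and cited without argument. Your proposal therefore supplies a proof where the paper gives none, and the argument you give is correct. For the inclusion, the key observation $\LT_<(\bfx^{\alpha}f)=\bfx^{\alpha}\LT_<(f)$ (valid because every monomial order is compatible with multiplication, and multiplying by a single monomial produces no cancellation) immediately converts $\mfm^N f\subseteq I$ into $\mfm^N\LT_<(f)\subseteq\LT_<(I)$, which is exactly $\LT_<(f)\in\bar{\LT_<(I)}$; passing to the ideal generated by such leading terms gives the inclusion. For the ``in particular'' clause, your chain $\LT_<(I)\subseteq\LT_<(\bar I)\subseteq\bar{\LT_<(I)}=\LT_<(I)$ forces $\LT_<(I)=\LT_<(\bar I)$, and combining the equality of Hilbert functions $\HF(S/J)=\HF(S/\LT_<(J))$ for homogeneous $J$ with $I\subseteq\bar I$ (recalling that the saturation of a homogeneous ideal is homogeneous) yields $I=\bar I$. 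This is the standard route; an equally quick alternative, avoiding Hilbert functions entirely, is the purely Gr\"obner-theoretic fact that nested ideals with the same leading-term ideal coincide, but nothing is gained or lost either way. The only point worth being explicit about, which you correctly flag, is that the Hilbert-function identity requires a homogeneous Gr\"obner basis so that reduction preserves degree; since $I$ is homogeneous such a basis exists, and the bookkeeping goes through.
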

We can finally conclude providing the formal proof of \autoref{teo border rank}. 
\begin{proof}[Proof of \autoref{teo border rank}]
If $g$ is a degenerate form such that $\rk g=2$, then the form $g^s$ is equivalent, under the action of $\SL_3(\bbC)$, to the form
\[
q_2^s=\bigl(x_1^2+x_2^2\bigr)^s.
\]
Thus, by \autoref{minimal decomposition 2 var}, we conclude that
\[
\brk \bigl(g^s\bigr)=\rk \bigl(g^s\bigr)=s+1.
\]
If instead $g$ is a non-degenerate form, then $g^s$ is equivalent to the form
\[
q_3^s=\bigl(x_1^2+x_2^2+x_3^2\bigr)^s.
\]
Now, \autoref{lem monomial ideal saturated n=3} and \autoref{lem 4.12Man} together guarantee that the ideal 
\[
I_{s+1}=(p_{s+1,s+1},p_{s+1,s},\dots,p_{s+1,0}),
\] 
is a saturated ideal for every $s\in\bbN$. 
If we now consider the Hilbert function of the ideal $I_{s+1}$ given by
\[
\begin{tikzcd}[row sep=0pt,column sep=1pc]
 \HF_{I_{s+1}}\colon \bbZ\arrow{r} & \bbZ\hphantom{,} \\
  {\hphantom{\HF_{I_{s+1}}\colon{}}} a \arrow[mapsto]{r} & \dim \bigl(\calD_a\big/(I_{s+1})_a\bigr),
\end{tikzcd}
\]
we first observe that for every $k\leq s$, since $(I_{s+1})_k=(0)$, we have
\[
\HF_{I_{s+1}}(k)=\dim\calD_k=\binom{k+2}{2}=\frac{(k+2)(k+1)}{2}.
\]
Moreover, since $I_{s+1}$ is generated by $s+1$ elements of degree $s+1$, we also know that
\[
\HF_{I_{s+1}}(s+1)=\dim \bigl(\calD_{s+1}\big/(I_{s+1})_{s+1}\bigr)=\binom{s+3}{2}-(s+1)=\frac{(s+3)(s+2)-2(s+2)}{2},
\]
which can be simplified to
\[
\HF_{I_{s+1}}(s+1)=\frac{s^2+3s+2}{2}=\binom{s+2}{2}=\HF_{I_{s+1}}(s),
\]
but this means that, since $I_{s+1}$ is a saturated ideal, we have
\[
\HF_{I_{s+1}}(k)=\HF_{I_{s+1}}(s)
\]
for every $k\geq s$. Denoting
\[
r=\binom{s+2}{2},
\]
it follows that
\[
\HF_{I_{s+1}}=h_{r,n}
\]
and hence
\[
I_{s+1}\in\mathrm{Hilb}^{\scriptscriptstyle{h_{r,n}}}_{{\scriptscriptstyle S[\bbP^n]}}.
\]
Therefore, by \autoref{prop Slip}, we have that
\[
I_{s+1}\in\mathrm{Slip}_{r,n},
\]
and finally, using \autoref{teo border apolarity} and \autoref{cor cat lower bound}, we can conclude that
\[
\brk\bigl(q_3^s\bigr)=\binom{s+2}{2}.
\qedhere
\]
\end{proof}

\vspace{20pt}
\subsection*{Acknowledgements.}
This paper has been realized from part of the author's Ph.D.~thesis, undertaken at Alma Mater Studiorum -- Università di Bologna under the patient co-supervision of Alessandro Gimigliano and Giorgio Ottaviani, to whom sincere thanks are due. The author would also reserve special thanks to Fulvio Gesmundo for his valuable suggestions and to Jaros{\l}aw Buczyński and Tomasz Mańdziuk for their precious help in clarifying some issues concerning border apolarity, which have been essential for the realization of this paper.

\begin{bibdiv}
\begin{biblist}
\bib{BCC+18}{article}{
title={The hitchhiker guide to: secant varieties and tensor decomposition},
author={Bernardi, A.},
author={Carlini, E.},
author={Catalisano, M.~V.},
author={Gimigliano, A.},
author={Oneto, A.},
journal={Mathematics},
series={Graduate Texts in Mathematics},
volume={6},
number={12},
date={2018},
}
\bib{BGI11}{article}{
title={Computing symmetric rank for symmetric tensors},
author={Bernardi,A.},
author={Gimigliano,A.},
author={Idà,M.},
journal={J. Symbolic Comput.},
date={2011},
volume={46},
number={1},
pages={34-53}
}
\bib{BB21}{article}{
title={Apolarity, border rank, and multigraded Hilbert scheme},
author={Buczy{\'n}ska,W.},
author={Buczy{\'n}ski,J.},
journal={Duke Math. J.},
date={2021},
volume={170},
number={16},
pages={3659-3702}
}
\bib{DS09}{webpage}{
title={Varieties, Gr{\"o}bner bases, and algebraic curves},
author={Decker,W.},
author={Schreyer,F.-O.},
url={https://www.math.uni-sb.de/ag/schreyer/images/PDFs/teaching/ws1617ag/book.pdf},
date={2009-10-12},
accessdate={2022-07-22}
}
\bib{Dol12}{book}{
title={Classical algebraic geometry},
subtitle={a modern view},
author={Dolgachev,I.~V.},
date={2012},
publisher={Cambridge University Press},
address={Cambridge} 
}
\bib{Eis95}{book}{
title={Commutative algebra},
subtitle={With a view toward algebraic geometry},
author={Eisenbud,D.},
series={Graduate Texts in Mathematics},
volume={150},
date={1995},
publisher={Springer-Verlag},
address={New York} 
}
\bib{EH00}{book}{
title={The geometry of schemes},
author={Eisenbud,D.},
author={Harris,J.},
series={Graduate Texts in Mathematics},
volume={197},
date={2000},
publisher={Springer-Verlag},
address={New York}
}
\bib{Fog68}{article}{
title={Algebraic families on an algebraic surface},
author={Fogarty,J.},
journal={Amer. J. Math.},
date={1968},
volume={90},
pages={511-521}
}
\bib{FH91}{book}{
title={Representation theory},
subtitle={A first course},
author={Fulton,W.},
author={Harris,J.},
series={Graduate Texts in Mathematics},
volume={129, Readings in Mathematics},
date={1991},
publisher={Springer-Verlag},
address={New York}
}
\bib{GL19}{article}{
title={Explicit polynomial sequences with maximal spaces of partial derivatives and a question of K.~Mulmuley},
author={Gesmundo,F.},
author={Landsberg,J.~M.},
journal={Theory Comput.},
date={2019},
volume={15},
number={3},
pages={1-24}
}
\bib{Gim89}{misc}{
title={Our thin knowledge of fat points},
author={Gimigliano,A.},
series={The Curves Seminar at Queen's, Vol. VI (Kingston, ON, 1989), Queen's Papers in Pure and Appl. Math.},
date={1989},
volume={83},
organization={Queen's Univ.},
address={Kingston, ON}
}
\bib{GW98}{book}{
title={Representations and invariants of the classical groups},
author={Goodman,R.},
author={Wallach,N.~R.},
series={Encyclopedia of Mathematics and its Applications},
volume={68},
date={1998},
publisher={Cambridge University Press},
address={Cambridge}
}
\bib{Gro61}{article}{
title={Techniques de construction et théorèmes d'existence en géométrie algébrique. IV. Les schémas de Hilbert},
author={Grothendieck,A.},
journal={Séminaire Bourbaki},
date={1961},
volume={13},
number={221},
pages={249-276},
language={French},
reprint={
title={Séminaire Bourbaki},
volume={6},
organization={Soc. Math. France},
author={N.~Bourbaki},
language={French},
address={Paris},
date={1995}
}
}
\bib{HS04}{article}{
title={Multigraded Hilbert schemes},
author={Haiman,M.},
author={Sturmfels,B.},
journal={J. Algebraic Geom.},
date={2004},
volume={13},
number={4},
pages={725-769}
}
\bib{Hou77}{article}{
title={Mélanges du role de l'expérience dans les sciences exactes},
author={Ho\"uel,M.~J.},
journal={J. Math. Élém.},
language={French},
date={1877},
volume={1},
pages={118-128}
}
\bib{IK99}{book}{
title={Power sums, Gorenstein algebras, and determinantal loci},
contribution={
type={an appendix},
author={Iarrobino,A.},
author={Kleiman,S.~L.}
},
author={Iarrobino,A.},
author={Kanev,V.},
series={Lecture Notes in Mathematics},
volume={1721},
date={1999},
publisher={Springer-Verlag},
address={Berlin}
}
\bib{Lan12}{book}{
title={Tensors},
subtitle={geometry and applications},
author={Landsberg,J.~M.},
series={Graduate Studies in Mathematics},
volume={128},
date={2012},
publisher={American Mathematical Society},
address={Providence, RI}
}
\bib{LO13}{article}{
title={Equations for secant varieties of Veronese and other varieties},
author={Landsberg,J.~M.},
author={Ottaviani, G.},
journal={Ann. Mat. Pura Appl. (4)},
date={2013},
volume={192},
number={4},
pages={569-606}
}
\bib{Man22}{article}{
title={Identifying limits of ideals of points in the case of projective space},
journal={Linear Algebra Appl.},
author={Ma{\'n}dziuk,T.},
volume={634},
date={2022},
pages={149-178}
}
\bib{Rez92}{article}{
title={Sums of even powers of real linear forms},
author={Reznick,B.},
journal={Mem. Amer. Math. Soc.},
date={1992},
volume={96},
number={463}
}
\bib{Syl51}{book}{
title={An essay on canonical forms, supplement to a sketch of a memoir on elimination, transformation and canonical forms},
author={Sylvester,J.~J.},
publisher={George Bell},
address={Fleet Street},
date={1851},
reprint={
title={The Collected Mathematical Papers},
author={Sylvester,J.~J.},
volume={1},
address={Chelsea, New York},
date={1873}
}
}

\end{biblist}
\end{bibdiv}

\end{document}